\DeclareSymbolFont{bchoperators}{T1}{bch}{m}{n}
\renewcommand{\operator@font}{\mathgroup\symbchoperators}
\titleformat{\section}{\normalfont\bfseries\filcenter}{\thesection}{1em}{}
\newcommand{\C}{\mathbb{C}}
\newcommand{\PP}{\mathbb{P}}
\newcommand{\Q}{\mathbb{Q}}
\newcommand{\PGL}{\operatorname{PGL}}
\newcommand{\Aut}{\operatorname{Aut}}
\newcommand{\Res}{\operatorname{Res}}
\newcommand{\tors}{\operatorname{tors}}
\newcommand{\id}{\operatorname{id}}
\newcommand{\To}{\longrightarrow}
\numberwithin{equation}{section}
\newtheorem{theorem}{Theorem}%[section]
\newtheorem{lemma}[theorem]{Lemma}
\newtheorem{corollary}[theorem]{Corollary}
\newtheorem{proposition}[theorem]{Proposition}
\theoremstyle{definition}
\newtheorem{example}[theorem]{Example}
\theoremstyle{remark}
\definecolor{darkgreen}{rgb}{0,0.5,0}
\definecolor{rem}{rgb}{0.8,0,0}
\definecolor{new}{rgb}{0.7,0,0.6}
\definecolor{reply}{rgb}{0,0,0.8}
\begin{document}

\title[Common torsion $x$-coordinates and torsion packets]%
      {Elliptic curves with common torsion $x$-coordinates \\
       and hyperelliptic torsion packets}

\author{Hang Fu}
\address{Department of Mathematics,
         National Taiwan University,
         Taipei, Taiwan}
\email{drfuhang@gmail.com}
\urladdr{https://sites.google.com/view/hangfu}

\author{Michael Stoll}
\address{Mathematisches Institut,
         Universit\"at Bayreuth,
         95440 Bayreuth, Germany.}
\email{Michael.Stoll@uni-bayreuth.de}
\urladdr{http://www.mathe2.uni-bayreuth.de/stoll/}

\date{\today}

\begin{abstract}
  We establish a connection between torsion packets on curves of genus~$2$
  and pairs of elliptic curves realized as double covers of the projective
  line~$\PP^1_x$ that have many common torsion $x$-coordinates. This can be used to
  show that the set of common torsion $x$-coordinates has size at least~$22$
  infinitely often and has $34$ elements in some cases. We also explain how
  we obtained the current record example of a hyperelliptic torsion packet
  on a genus~$2$ curve.
\end{abstract}

\subjclass{11G05, 11G30, 14H40, 14H45, 14H52.}

\keywords{Elliptic curves, Torsion points, Hyperelliptic curves, Torsion packets.}

\maketitle

%==========================================================================

\section{Introduction}

Let $E_1$ and~$E_2$ be elliptic curves over~$\C$, together with double covers
$\pi_1 \colon E_1 \to \PP^1$ and $\pi_2 \colon E_2 \to \PP^1$ such that
the origin of~$E_j$ is a ramification point of~$\pi_j$. It is known
(as a consequence of Raynaud's~\cite{Raynaud} proof of the Manin-Mumford
Conjecture; see for example~\cite{BogomolovTschinkel}*{Thm.~4.2})
that when $\pi_1(E_1[2]) \neq \pi_2(E_2[2])$, then the intersection
\[ I(\pi_1, \pi_2) = \pi_1(E_{1,\tors}) \cap \pi_2(E_{2,\tors}) \]
is finite, where $E_{j,\tors}$ denotes the set of torsion points on~$E_j$.
One can then ask how large this intersection can be, possibly depending on the size
of $\pi(E_1[2]) \cap \pi_2(E_2[2])$.
In~\cite{BogomolovFuTschinkel}*{Conjs.\ 2 and~12}
(see also~\cite{BogomolovFu}*{Conj.~1.2}),
it is conjectured that there should be a uniform bound for the size
of~$I(\pi_1, \pi_2)$ whenever it is finite. The recent
paper~\cite{DeMarcoKriegerYe} by DeMarco, Krieger, and Ye establishes the existence of such a
uniform bound in the case that $\#\bigl(\pi_1(E_1[2]) \cap \pi_2(E_2[2])\bigr) = 3$.
The very recent results by Dimitrov, Gao, Ge, Habegger, and
K\"uhne~\cites{DimitrovGaoHabegger,Kuehne,Gao,GaoGeKuehne} on uniformity
in the Mordell-Lang conjecture for subvarieties
of abelian varieties (see also below) now imply the existence of a uniform
bound for $\#I(\pi_1, \pi_2)$ as conjectured. This follows by applying
their result to the families of curves in $E_1 \times E_2$ obtained as the pull-back
of the diagonal in~$\PP^1 \times \PP^1$ under~$(\pi_1, \pi_2)$.
The bounds are (so far) not explicit, and so it is an interesting question
how large they have to be.

An at first sight somewhat different question is how large a torsion
packet on a curve of genus~$2$ over~$\C$ can be. Recall that a \emph{torsion packet}
on a curve~$C$ of genus at least~$2$ is a maximal set of points on~$C$ such that
the (linear equivalence class of the) difference of any two points in the set
is a point of finite order on the Jacobian of~$C$. Again by~\cite{Raynaud},
such a torsion packet is always finite, and one can ask for a bound on its
size that depends only on the genus~$g$ \cite{Mazur}*{top of page~234}.
In~\cite{Poonen} it is shown
that there are infinitely many essentially distinct curves of genus~$2$ with
a \emph{hyperelliptic} torsion packet (i.e., containing the Weierstrass points)
of size at least~$22$.
Within the family giving rise to these examples, there is (at least) one
with a torsion packet of size~$34$; see~\cite{Stoll34} and Section~\ref{S:tp} below.
Using the fact that in the case $\#\bigl(\pi_1(E_1[2]) \cap \pi_2(E_2[2])\bigr) = 3$,
the (desingularization of the) pull-back of the diagonal in~$\PP^1 \times \PP^1$
under $(\pi_1, \pi_2)$ is a bielliptic curve of genus~$2$, DeMarco, Krieger,
and Ye deduce the existence of a uniform bound on the size of hyperelliptic
torsion packets on bielliptic curves of genus~$2$, and the work of Dimitrov,
Gao, Ge, Habegger, and K\"uhne already mentioned above now establishes the existence
of a bound that depends only on~$g$, as expected (and much more).

\medskip

In this article, we use the connection between bielliptic genus~$2$ curves
and pairs $(\pi_1, \pi_2)$ such that $\#\bigl(\pi_1(E_1[2]) \cap \pi_2(E_2[2])\bigr) = 3$
that was already mentioned above,
together with a new observation that relates pairs $(\pi_1, \pi_2)$ with differing
sizes of $\pi_1(E_1[2]) \cap \pi_2(E_2[2])$ to establish a close relation
between the largest size of an intersection $I(\pi_1, \pi_2)$ and the
largest size of a hyperelliptic torsion packet on a bielliptic genus~$2$ curve.
Specifically, we show the following (see Corollary~\ref{cor:I-and-T}).

\begin{theorem} \label{Trel}
  The maximal size of a finite intersection~$I(\pi_1, \pi_2)$ is at least
  as large as the maximal size of a hyperelliptic torsion packet on
  a bielliptic genus~$2$ curve.
\end{theorem}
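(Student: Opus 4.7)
Let $C$ be a bielliptic genus~$2$ curve over~$\C$ carrying a hyperelliptic torsion packet $T$ of size $N = |T|$. I plan to exhibit a pair $(\pi_1, \pi_2)$ of elliptic double covers of~$\PP^1$ with $|I(\pi_1, \pi_2)| \geq N$, which suffices since this applies in particular to the~$C$ realizing the maximum of~$|T|$.

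The first step is to invoke the classical correspondence recalled in the introduction: the bielliptic structure realizes $C$ (after desingularization) as $E_1 \times_{\PP^1} E_2$ for a pair $(\pi_1, \pi_2)$ with $\#(\pi_1(E_1[2]) \cap \pi_2(E_2[2])) = 3$, where the~$E_j$ are the elliptic quotients and $f_j \colon C \to E_j$ are the bielliptic projections. Using that the induced homomorphism $\operatorname{Jac}(C) \to E_1 \times E_2$ is an isogeny (hence has finite kernel) and that each Weierstrass point of~$C$ maps to a $2$-torsion point of each~$E_j$, one identifies $T$ with the set of $(P_1, P_2) \in C$ such that $P_j \in E_{j,\tors}$ for $j = 1, 2$. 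In particular every point of~$T$ yields a common torsion $x$-coordinate $\pi_1(P_1) = \pi_2(P_2) \in I(\pi_1, \pi_2)$.

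The map $T \to I(\pi_1, \pi_2)$ just described is, however, many-to-one: the Klein four group generated by the two bielliptic involutions acts on~$T$ with orbits of size up to~$4$, each orbit collapsing to a single common $x$-coordinate. A direct count gives a relation of the form $|T| = 2\,|I(\pi_1, \pi_2)| + 2d$ for some $d \geq 0$, so the natural pair only yields $|I| \geq |T|/2$, which is weaker than what is required. The crucial second ingredient is the ``new observation'' mentioned in the introduction, which relates pairs with differing values of $\#(\pi_1(E_1[2]) \cap \pi_2(E_2[2]))$. It produces a modified pair $(\pi_1', \pi_2')$---obtained from $(E_1, E_2)$ by replacing one factor by a suitably chosen isogenous elliptic curve, or equivalently by passing to a pair whose branch loci intersect in~$2$ rather than~$3$ points (so that the fiber product is elliptic)---for which the Klein four identifications are ``unfolded'', so that each element of~$T$ now contributes a distinct element of $I(\pi_1', \pi_2')$. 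This yields $|I(\pi_1', \pi_2')| \geq N$ and completes the argument.

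The principal obstacle is the construction and justification of this modified pair. The naive pair loses a factor of roughly two because the $\pm$-ambiguity on each elliptic factor collapses pairs of torsion $x$-coordinates into a single one, and overcoming this factor is not possible by staying within the world of pairs with $\#\cap = 3$; the genuinely new geometric input supplied by the observation relating pairs of different branch-intersection sizes is precisely what breaks the degeneracy and converts the bound $|I| \geq |T|/2$ into the desired $|I| \geq |T|$.
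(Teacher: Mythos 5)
Your overall architecture is right and matches the paper's: first pass from a bielliptic genus~$2$ curve $C$ to a pair $(\pi, \pi')$ with $P(\pi, \pi') = (3,1)$ via the fibre-product picture, then use the ``new observation'' (the paper's Proposition~\ref{prop:main}) to pass to a pair with a smaller branch-locus intersection and a larger~$I$. But there is a genuine gap in the quantitative part, and your description of the unfolding step is not correct as stated.

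First, the count. The relation between $T$ and $I(\pi,\pi')$ is through the degree-$4$ map $\rho\colon C \to \PP^1$ (hyperelliptic double cover followed by the quotient by the extra involution), whose fibres are exactly the Klein-group orbits. Generic orbits in~$T$ have size~$4$, with size-$2$ orbits over the three common $2$-torsion $x$-coordinates (always in $I$) and over $\{0,\infty\}$ when those lie in~$I$. This gives $|T| = 4\,|I(\pi,\pi')| - 6 - 2\#\bigl(I(\pi,\pi') \cap \{0,\infty\}\bigr)$, which is the paper's Proposition~\ref{prop:g2}, so $|I(\pi,\pi')| \ge (|T|+6)/4$. Your formula $|T| = 2|I| + 2d$ is wrong on two counts: the collapse factor is $4$, not $2$, and with $d \ge 0$ it would only yield the \emph{upper} bound $|I| \le |T|/2$, not a lower bound.

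Second, because the loss is a factor of roughly $4$, the unfolding must be applied \emph{twice}, each time doubling (more precisely $\#I \mapsto 2\#I - 2$, as in Proposition~\ref{prop:main}), going $P = (3,1) \to (2,2) \to (0,4)$. That is exactly the content of Corollary~\ref{cor:explicit}: $T(0,4) = 4T(3,1) - 6$, whence $T(0,4) \ge 4 \cdot \tfrac{|T|+6}{4} - 6 = |T|$. A single unfolding step, as you describe, only recovers a factor of~$2$ and does not close the gap. Also, your claim that the modified pair has ``branch loci intersect[ing] in $2$ rather than $3$ points (so that the fiber product is elliptic)'' is not correct: if the two branch loci meet in two points, the desingularized fibre product has genus~$3$, not~$1$. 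The correct construction replaces each $E_j$ by a $2$-isogenous curve $E_j / \langle T_j \rangle$, the isogeny being the one attached to a shared involution $\alpha$ of $\PP^1$; it is Proposition~\ref{prop:main} in the paper, and its explicit form (the substitution $x \mapsto x^2$) is what makes the bookkeeping above exact.
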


This follows from an explicit correspondence between bielliptic genus~$2$
curves and pairs $(\pi_1, \pi_2)$. Under this correspondence, the family of genus~$2$
curves studied in~\cite{Poonen} is related to a family of pairs of elliptic
curves with certain properties. As a consequence, we obtain the following result,
which proves Conjecture~23 in~\cite{BogomolovFuTschinkel}.

\begin{theorem} \label{T22}
  There are infinitely many essentially distinct pairs~$(\pi_1, \pi_2)$
  as above such that $\#I(\pi_1, \pi_2) \geq 22$.
\end{theorem}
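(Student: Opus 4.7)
The plan is to deduce Theorem~\ref{T22} directly from Theorem~\ref{Trel} (as made explicit by the correspondence leading to Corollary~\ref{cor:I-and-T}) together with Poonen's result in~\cite{Poonen} that there are infinitely many essentially distinct curves of genus~$2$ over~$\C$ carrying a hyperelliptic torsion packet of size at least~$22$. The argument is purely formal once the correspondence is in hand: translate the input along it.

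First I would verify that every curve in Poonen's one-parameter family is bielliptic. This is the non-formal step, and one checks it by exhibiting an explicit non-hyperelliptic involution of the curve, say by inspecting the defining Weierstrass equation $y^2 = f(x)$ and observing that $f$ is (after a change of variables) a polynomial in $x^2$, so that the involution $x \mapsto -x$ descends to two elliptic quotients $E_1$ and $E_2$. The degree-$2$ maps $\pi_j \colon E_j \to \PP^1$ are the ones through which the bielliptic structure factors, and the condition $\#(\pi_1(E_1[2]) \cap \pi_2(E_2[2])) = 3$ holds because $C$ has genus~$2$ (i.e., the ramification data of the double cover $C \to \PP^1$ is compatible with both quotients).

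Second I would apply the correspondence of Theorem~\ref{Trel} to each such bielliptic curve. Concretely, if $C \subset E_1 \times E_2$ is the (normalization of the) pullback of the diagonal of $\PP^1 \times \PP^1$ under $(\pi_1, \pi_2)$, then the hyperelliptic torsion packet of~$C$ through a Weierstrass point~$P_0$ maps, via the two projections, into $\pi_1(E_{1,\tors}) \cap \pi_2(E_{2,\tors}) = I(\pi_1, \pi_2)$; the correspondence is set up so that a torsion packet of size $\geq 22$ on~$C$ forces $\#I(\pi_1, \pi_2) \geq 22$. Since each $C$ in Poonen's family produces such an intersection, we obtain a one-parameter family of pairs $(\pi_1, \pi_2)$ with the desired lower bound.

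Finally I would argue that the \emph{essentially distinct} property is preserved. Essentially distinct bielliptic curves correspond, under the explicit recipe above, to essentially distinct pairs $(\pi_1, \pi_2)$ modulo the action of $\PGL_2$ on~$\PP^1$ and isomorphism of the elliptic curves, because the correspondence is finite-to-one on moduli: from $(\pi_1, \pi_2)$ one recovers $C$ uniquely, and from $C$ one recovers the unordered pair $\{(\pi_1, \pi_2), (\pi_2, \pi_1)\}$. Thus a one-parameter family of pairwise non-isomorphic bielliptic curves yields a one-parameter family of inequivalent pairs. The main obstacle in this plan is the first step: checking that Poonen's family really is bielliptic (and identifying the two elliptic quotients together with their covering maps to~$\PP^1$) so that the hypotheses of the correspondence are met; once that is done, everything else is an application of Theorem~\ref{Trel}.
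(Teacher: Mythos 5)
Your outline is right — take Poonen's bielliptic family and translate torsion packets into common torsion $x$-coordinates — but there is a quantitative error at the heart of your second step that constitutes a genuine gap. The pair $(\pi_1, \pi_2)$ attached \emph{directly} to a bielliptic genus-$2$ curve $C$ (the two elliptic quotients $E$, $E'$ with their $x$-coordinate maps, so that $C \cong E \times_{\PP^1} E'$) has $P(\pi_1, \pi_2) = (3,1)$, and the natural map $\rho \colon C \to \PP^1$ used in Proposition~\ref{prop:g2} to compare the torsion packet with $I(\pi_1, \pi_2)$ has degree $4$, not $1$. That proposition gives the packet size as $4\#I(\pi_1,\pi_2) - 6 - 2\#\bigl(I(\pi_1,\pi_2) \cap \{0,\infty\}\bigr)$, so a packet of size $\geq 22$ only forces $\#I(\pi_1,\pi_2) \geq 7$ for this pair, \emph{not} $\geq 22$. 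Your claim that ``a torsion packet of size $\geq 22$ on $C$ forces $\#I(\pi_1,\pi_2) \geq 22$'' is simply false for the pair you describe, and the theorem does not follow from the direct correspondence alone.

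The missing idea is the isogeny-based transformation of Proposition~\ref{prop:main}: passing to $E_j/\langle T_j \rangle$ relates pairs with invariant $P = (2a, 2b)$ to pairs with $P = (a+2, b)$, and under it $\#I \mapsto 2\#I - 2$. The paper applies this \emph{backwards, twice}, to inflate the $(3,1)$-pair with $\#I \geq 7$ first to a $(2,2)$-pair with $\#I \geq 12$, and then to a $(0,4)$-pair with $\#I \geq 22$; these last pairs are the ones asserted in Theorem~\ref{T22} (compare Corollary~\ref{cor:explicit}: $T(0,4) = 4T(3,1) - 6$). Without this amplification step your argument does not close. Two smaller caveats: for Poonen's family $y^2 = x^6 + t x^3 + 1$ the extra involution is $(x,y) \mapsto (1/x, y/x^3)$, so $f$ becomes a polynomial in $x^2$ only after a M\"obius change of variable, which you gesture at but should make explicit; and your ``essentially distinct'' discussion describes only the $(3,1)$-stage bijection, whereas the final pairs live at the $(0,4)$ stage, so one should also track the finite choices made in the two backward applications of Proposition~\ref{prop:main}.
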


Applying the correspondence to our example of a torsion packet of size~$34$,
we obtain the following new record for the size of~$I(\pi_1, \pi_2)$.

\begin{theorem} \label{T34}
  Let $s \in \C$ satisfy $s^8 + 174 s^4 + 81 = 0$. Consider
  \[ E_1 \colon y^2 = (x^2 - s^2)(x^2 - (1/s)^2) \qquad\text{and}\qquad
     E_2 \colon y^2 = (x^2 - (s/3)^2)(x^2 - (3/s)^2) \,.
  \]
  We take $\pi_1$ and~$\pi_2$ to be the $x$-coordinate map (and fix the origins
  of $E_1$ and~$E_2$ to be $(s,0)$ and~$(s/3,0)$, respectively). Then
  \[ I(\pi_1, \pi_2) = \pi_1(E_1[48]) \cap \pi_2(E_2[48]) \]
  and $\#I(\pi_1, \pi_2) = 34$.
\end{theorem}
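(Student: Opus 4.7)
The plan is to transport the hyperelliptic torsion packet of size~$34$ on the genus~$2$ curve from the Stoll34 record through the explicit correspondence developed earlier, and then perform the reverse verification in the language of $x$-coordinates on~$E_1$ and~$E_2$.

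First I would identify the bielliptic genus~$2$ curve~$C$ in the Poonen family (from Section~\ref{S:tp}) that is the source of the record torsion packet. The algebraic condition $s^8 + 174 s^4 + 81 = 0$ is exactly the condition that singles out the Stoll34 curve within this one-parameter family, and in particular the parameter~$s$ should be computed from the parameter of~$C$ through the correspondence of Corollary~\ref{cor:I-and-T}. One then applies the explicit form of this correspondence to produce an injection of the $34$-element torsion packet~$T \subseteq C$ into a subset $S \subseteq I(\pi_1,\pi_2)$ with $\#S = 34$. By tracing the torsion orders through the correspondence (the hyperelliptic torsion packet on the Stoll34 curve sits inside $J(C)[N]$ for a specific small~$N$, which pulls back to $48$-torsion on the two elliptic quotients), one obtains $S \subseteq \pi_1(E_1[48]) \cap \pi_2(E_2[48])$. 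This already gives $\#I(\pi_1,\pi_2) \geq 34$ and the lower inclusion.

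Next comes the upper bound. Here I would perform a direct computation of $\pi_1(E_1[48]) \cap \pi_2(E_2[48])$: enumerate the finitely many $x$-coordinates of $48$-torsion points on each $E_j$, reduce modulo $\pm$ (since $x(P) = x(-P)$), and intersect the two finite sets. The claim is that the intersection has exactly $34$ elements, matching~$S$, which then forces equality $S = \pi_1(E_1[48]) \cap \pi_2(E_2[48])$. This step is mechanical once the division polynomials of $E_1$ and~$E_2$ are at hand, and is made feasible by the relatively small exponent~$48$.

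Finally, one needs to rule out any \emph{further} common torsion $x$-coordinate of order not dividing~$48$. The cleanest route is to argue through the correspondence in both directions: a common torsion $x$-coordinate of $(\pi_1,\pi_2)$ pulls back, through the bielliptic double cover $C \to E_1 \times E_2 / \sim$ arising from the correspondence, to a torsion-difference point on the genus~$2$ curve~$C$, hence to an element of the hyperelliptic torsion packet of~$C$. Since the torsion packet of~$C$ has been determined (it has exactly $34$ points in $J(C)[48]$), this forces $I(\pi_1,\pi_2) \subseteq S$ and completes the proof.

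The main obstacle is this last step: controlling all of $I(\pi_1,\pi_2)$, not just the $34$ elements produced from the known torsion packet. The lower-bound and exact-count parts are essentially computational, but closing the gap requires either a bijectivity refinement of Theorem~\ref{Trel} in this specific geometric setting, or an effective height/Galois bound showing that any further common $x$-coordinate would already appear among $48$-torsion. I would expect the paper to handle this by leveraging the explicit bielliptic geometry of the Poonen/Stoll34 curve, together with the already-verified completeness of the $34$-point torsion packet on that specific curve, rather than by an abstract uniformity statement.
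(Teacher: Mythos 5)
Your approach matches the paper's: transport the size-$34$ hyperelliptic torsion packet of the Stoll34 curve $y^2 = x^6 + 130x^3 + 13$ (Theorem~\ref{thm:g2_34}, verified by Poonen's program) through the explicit correspondence, applying Proposition~\ref{prop:g2} once and the reverse construction of Proposition~\ref{prop:main} twice. The ``obstacle'' you flag in your last paragraph is not actually a gap, because those propositions establish genuine equalities of sets rather than one-sided inclusions — the torsion packet is precisely $\rho^{-1}(I(\pi,\pi'))$ and $I(\pi_1,\pi_2) = \beta^{-1}(I(\pi'_1,\pi'_2))$ — so the exact count $\#I(\pi_1,\pi_2)=34$ (upper and lower bound simultaneously) drops out of the completeness of the $34$-point packet together with the size formulas of Corollaries~\ref{cor:explicit} and~\ref{cor:I-and-T}; no separate height or Galois argument is needed. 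Two small corrections: the enumeration of $48$-torsion $x$-coordinates you propose is the ``easy check'' the paper alludes to for the $E[48]$-truncation claim and becomes a consistency verification once $\#I=34$ is known, and the object you call ``the correspondence of Corollary~\ref{cor:I-and-T}'' is only a numerical inequality — the explicit maps needed to pin down the parameter $s$ live in Propositions~\ref{prop:g2}, \ref{prop:conv}, and~\ref{prop:main}.
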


Note that except for the fact that all the torsion points with common
$x$-coordinate have order dividing~$48$, the statement can be easily checked
by a computation, which shows in particular that $\#I(\pi_1, \pi_2) \ge 34$.

\medskip

The structure of this paper is as follows.
In Section~\ref{S:pairs}, we define a pair~$P(\pi_1, \pi_2)$ of numerical invariants
of~$(\pi_1, \pi_2)$ and set up a correspondence between pairs~$(\pi_1, \pi_2)$
and~$(\pi'_1, \pi'_2)$ whose invariants are related in a certain way.
This implies a relation between $\#I(\pi_1, \pi_2)$ and~$\#I(\pi'_1, \pi'_2)$.
In Section~\ref{S:relg2}, we explain the connection between pairs~$(\pi_1, \pi_2)$
such that $\#\bigl(\pi_1(E_1[2]) \cap \pi_2(E_2[2])\bigr) = 3$ and bielliptic
curves of genus~$2$. This connection, together with the correspondence
from Section~\ref{S:pairs} then implies Theorems~\ref{Trel} and \ref{T22}. In Section~\ref{S:tp}, we explain how the example~\cite{Stoll34}
of a large hyperelliptic torsion packet was obtained. Finally,
in Section~\ref{S:subfamily}, we give more details on the
pairs~$(\pi_1, \pi_2)$ with $\pi_1(E_1[2]) \cap \pi_2(E_2[2]) = \emptyset$
that correspond to the curves in the family considered by Poonen in~\cite{Poonen}. We give the proof of Theorem \ref{T34}, and also show that the curves $E_1$ and~$E_2$ that appear in Theorem~\ref{T34}
are isogenous, which is unnecessary for the proof, but gives a hint of why we are able
to get many common torsion $x$-coordinates in this way.

All geometric objects in this paper will be over the complex numbers
unless explicitly stated otherwise.

\subsection*{Acknowledgments} \strut

The first named author would like to thank Laura DeMarco for her valuable comments
to the first version of this article. The authors would like to thank Yuri Bilu
for connecting us with each other.
The authors would also like to thank the anonymous referees for their helpful feedback.

%==========================================================================

\section{Relations among various pairs $(\pi_1, \pi_2)$}\label{S:pairs}

Let $\pi \colon E \to \PP^1$ be a double cover such that $E$ is an elliptic
curve and the origin of~$E$ is a ramification point of~$\pi$.
We note that the set $\pi(E_{\tors})$ does not depend on which of the
four ramification points we choose as the origin, since the difference
of any two is a point of order~$2$. Given $\pi$ as above, the
action of~$E[2]$ on~$E$ by translation induces an action on~$\PP^1\!$.
We denote the isomorphic copy of the Klein Four Group inside~$\PGL(2)$
that is the image of~$E[2]$ by~$G(\pi)$.

Given two double covers $\pi_1$, $\pi_2$ as above, we can then classify
them according to the sizes of $\pi_1(E_1[2]) \cap \pi_2(E_2[2])$
and of $G(\pi_1) \cap G(\pi_2)$. Note that the first set is a union of
orbits under the second group, on which the group acts without fixed points.
If $\pi_1(E_1[2]) = \pi_2(E_2[2])$, then $E_1$ and~$E_2$ are isomorphic
(up to the choice of origin on the elliptic curves) and
$I(\pi_1, \pi_2) = \pi_1(E_{1,\tors}) = \pi_2(E_{2,\tors})$ is infinite.
Excluding this case, the possibilities for the pair
\[ P(\pi_1, \pi_2) = \bigl(\#(\pi_1(E_1[2]) \cap \pi_2(E_2[2])), \#(G(\pi_1) \cap G(\pi_2))\bigr) \]
are
\[ (3, 1), \quad (2, 1), \quad (1, 1), \quad (0, 1); \quad
   (2, 2), \quad (0, 2); \quad (0, 4) \,.
\]

We will now relate pairs with different invariants.

\begin{proposition} \label{prop:main}
  Let $\pi_1 \colon E_1 \to \PP^1$ and $\pi_2 \colon E_2 \to \PP^1$ be two
  double covers as above and fix a non-trivial element $\alpha \in G(\pi_1) \cap G(\pi_2)$
  (in particular, we assume that $G(\pi_1) \cap G(\pi_2)$ is non-trivial).
  For $j \in \{1,2\}$, we denote by $T_j \in E_j[2]$ the point such that
  $\pi_j(P + T_j) = \alpha(\pi_j(P))$, and we write $E'_j = E_j/\langle T_j \rangle$.
  Then there are double covers $\pi'_j \colon E'_j \to \PP^1$ and a morphism
  $\beta \colon \PP^1 \to \PP^1$ of degree~$2$ such that
    \[ \#\bigl(\pi_1(E_1[2]) \cap \pi_2(E_2[2])\bigr) = 2\#\bigl(\pi'_1(E'_1[2]) \cap \pi'_2(E'_2[2])\bigr)-4 \]
  and
  \[ I(\pi_1, \pi_2) = \beta^{-1}\bigl(I(\pi'_1, \pi'_2)\bigr) \,. \]
  In particular, $\#I(\pi_1, \pi_2) = 2 \#I(\pi'_1, \pi'_2) - 2$.

  Conversely, given $\pi'_1 \colon E'_1 \to \PP^1$ and $\pi'_2 \colon E'_2 \to \PP^1$
  such that $\#\bigl(\pi'_1(E'_1[2]) \cap \pi'_2(E'_2[2])\bigr) \ge 2$,
  there are double covers $\pi_1 \colon E_1 \to \PP^1$ and~$\pi_2 \colon E_2 \to \PP^1$
  and $\id \neq \alpha \in G(\pi_1) \cap G(\pi_2)$ such that $(\pi'_1, \pi'_2)$
  is obtained from~$(\pi_1, \pi_2)$ and~$\alpha$ in the way described above.
\end{proposition}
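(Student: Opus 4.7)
The plan is to build $\beta$ and the $\pi'_j$ as concrete quotients, then read off all stated identities from the resulting commutative square. Since $\alpha$ is a non-trivial involution of~$\PP^1$, it has exactly two fixed points $F_1,F_2$, so I take $\beta\colon\PP^1\to\PP^1/\langle\alpha\rangle\cong\PP^1$ to be the quotient (a degree-two cover branched at~$\{F_1,F_2\}$). Because $T_j\ne 0$, translation by~$T_j$ on~$E_j$ is fixed-point-free, so $q_j\colon E_j\to E'_j$ is an unramified $2$-isogeny. The defining property of~$T_j$ makes $\beta\circ\pi_j$ invariant under $T_j$-translation, so it will descend to a unique degree-two morphism $\pi'_j\colon E'_j\to\PP^1/\langle\alpha\rangle$ with $\beta\circ\pi_j=\pi'_j\circ q_j$; taking $q_j(0_{E_j})$ as the origin of~$E'_j$ then makes it a ramification point of~$\pi'_j$, so each $\pi'_j$ is a double cover of the type considered earlier.

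Next I will analyse $\pi'_j(E'_j[2])$. Two of the four ramification points of~$\pi'_j$ are the images $q_j(E_j[2])$, sent by $\pi'_j$ to~$\beta(\pi_j(E_j[2]))$, a two-element set because $\alpha$ acts freely on the four-element set $\pi_j(E_j[2])$ (the equation $2P=-T_j$ has no solution in~$E_j[2]$, so $F_1,F_2\notin\pi_j(E_j[2])$). The other two ramification points come from the coset $\{P\in E_j:2P=-T_j\}$, whose $\pi_j$-image is exactly $\{F_1,F_2\}$ with each~$F_i$ arising from a pair~$\{P,-P\}$; these map under~$\pi'_j$ to the branch points $\beta(F_1),\beta(F_2)$. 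Hence $\pi'_j(E'_j[2])=\beta(\pi_j(E_j[2]))\sqcup\{\beta(F_1),\beta(F_2)\}$. Intersecting over $j=1,2$, the two branch points of~$\beta$ always contribute; by the $\alpha$-invariance of each $\pi_j(E_j[2])$ the remaining contribution equals $\beta(\pi_1(E_1[2])\cap\pi_2(E_2[2]))$, and has exactly half the cardinality of $\pi_1(E_1[2])\cap\pi_2(E_2[2])$ because $\alpha$ still acts freely on the intersection. This yields the first identity. The second will follow from the same style of argument applied to torsion: $q_j$ is surjective on torsion, so $\pi'_j(E'_{j,\tors})=\beta(\pi_j(E_{j,\tors}))$, and $\pi_j(E_{j,\tors})$ is $\alpha$-invariant because $T_j$ is itself torsion; together these give $I(\pi_1,\pi_2)=\beta^{-1}(I(\pi'_1,\pi'_2))$, with $\#I(\pi_1,\pi_2)=2\#I(\pi'_1,\pi'_2)-2$ since both branch points of~$\beta$ already lie in~$I(\pi'_1,\pi'_2)$.

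For the converse, given $\pi'_1,\pi'_2$ with at least two common $2$-torsion images $B_1,B_2$, I will let $\beta\colon\PP^1\to\PP^1$ be the double cover branched at~$\{B_1,B_2\}$ and take~$E_j$ to be the normalisation of the fibre product $E'_j\times_{\PP^1}\PP^1$. In local coordinates at~$B_i$ where $\pi'_j$ reads $y^2=u\,h(u)$ with $h(B_i)\ne 0$ and $\beta$ reads $u=x^2$, the fibre product equation $y^2=x^2h(x^2)$ has tangent cone $y^2-h(0)x^2$, an ordinary node, so the normalisation separates the two local branches and the induced map $q_j\colon E_j\to E'_j$ acquires two distinct unramified preimages over~$B_i$. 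This shows that $E_j$ is smooth, that $q_j$ is an unramified $2$-isogeny, and that $\pi_j\colon E_j\to\PP^1$ is a double cover ramified at exactly four points, so $E_j$ is an elliptic curve by Riemann--Hurwitz; choosing the origin of~$E_j$ to be any ramification point of~$\pi_j$ that maps to~$0_{E'_j}$ under~$q_j$ (which exists after possibly replacing the origin of~$E'_j$ by another ramification point of~$\pi'_j$), setting $\alpha$ to be the deck transformation of~$\beta$, and $T_j=\ker q_j$, recovers the forward setup. The main technical point I anticipate is carrying out this local-to-global normalisation argument carefully enough to conclude both the unramifiedness of~$q_j$ and the genus of~$E_j$; once that is in place, everything else reduces to the combinatorics of the first two paragraphs.
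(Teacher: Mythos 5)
Your proof is correct, and it arrives at the same structural facts as the paper — the quotient $q_j\colon E_j\to E'_j$ by $T_j$ matches the quotient $\beta$ of $\PP^1$ by $\alpha$, the branch points of $\beta$ are exactly the two extra common $2$-torsion $x$-coordinates on the $E'_j$ side, and the inclusion/counting of $I$ follows from $\alpha$-invariance of $\pi_j(E_{j,\tors})$ — but you reach them intrinsically, whereas the paper simply picks coordinates. The paper normalizes $\alpha(x)=-x$, writes $E_j\colon y^2=(x^2-s_j)(x^2-t_j)$, observes that the quotient by $(x,y)\mapsto(-x,-y)$ is $E'_j\colon y^2=x(x-s_j)(x-t_j)$ via $(x,y)\mapsto(x^2,xy)$ with $\beta(x)=x^2$, and then the whole proposition (including the converse) is an immediate reading of these equations. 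Your abstract route (descent of $\beta\circ\pi_j$, normalization of a fibre product, local node analysis at $B_i$) is sound but carries extra bookkeeping that the explicit model avoids; in particular you flag but do not actually close the connectedness of the normalized fibre product in the converse direction. That point does follow quickly — if the normalization split into two components, each would be isomorphic to the genus-one curve $E'_j$ under $q_j$ yet map with degree one to $\PP^1$ under $\pi_j$, a contradiction — but it is worth stating, since without it ``$E_j$ is an elliptic curve'' is not yet justified. The rest of your argument (the identification $\pi_j^{-1}\{F_1,F_2\}=\{P:2P=T_j\}$, the free $\alpha$-action on $\pi_j(E_j[2])$, the disjoint-union decomposition of $\pi'_j(E'_j[2])$, and the use of $\alpha$-invariance to commute $\beta$ with intersections) is accurate and reproduces the paper's counting.
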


\begin{proof}
  First consider one double cover $\pi \colon E \to \PP^1$. Up to post-composing
  with an automorphism of~$\PP^1$, we can assume
  that one element of $G(\pi)$ is $x \mapsto -x$; then $E$ has the form
  \[  E \colon y^2 = (x^2 - s) (x^2 - t) \]
  (up to scaling~$y$) with $s \neq t$ and $s$, $t$ nonzero. Let $T \in E[2]$ be the point such
  that translation by~$T$ on~$E$ is given by $(x,y) \mapsto (-x,-y)$.
  Then the isogeny $\phi \colon E \to E' = E/\langle T \rangle$ is
  \[ (x, y) \mapsto (x^2, xy)\,, \qquad \text{with} \qquad E' \colon y^2 = x (x - s) (x - t) \,. \]
  Since $\phi$ is an isogeny, it follows that
  \[ \pi(E_{\tors}) = \{\xi \in \PP^1 : \xi^2 \in \pi'(E'_{\tors})\} \,, \]
  where $\pi' \colon E' \to \PP^1$ is the $x$-coordinate map. Also note
  that $\{0, \infty\} \subset \pi'(E'[2])$.

  Given $\pi_1$, $\pi_2$ and~$\alpha$ as in the statement, we can again assume
  that $\alpha(x) = -x$, so that the curves $E_1$ and~$E_2$
  have the form $E_j \colon y^2 = (x^2 - s_j) (x^2 - t_j)$.
  Let $E'_1$ and~$E'_2$ be as in the statement; by the above, we can take
  $E'_j \colon y^2 = x (x - s_j) (x - t_j)$, and we let $\pi'_j$ denote
  the $x$-coordinate map. The first claim follows (with $\beta \colon x \mapsto x^2$).

  Since both ramification points $0$ and~$\infty$ of the squaring map are in~$I(\pi'_1, \pi'_2)$,
  the claim on the sizes of the sets also follows.

  For the converse statement, we can assume that $E'_j \colon y^2 = x (x - s_j) (x - t_j)$
  by moving two of the common ramification points to $0$ and~$\infty$. Then it is
  clear that the construction can be reversed.
\end{proof}

For $(\pi_1, \pi_2)$ and $(\pi'_1, \pi'_2)$ as in Proposition~\ref{prop:main},
we have that
\[ P(\pi_1, \pi_2) = (2a, 2b) \qquad\text{and}\qquad P(\pi'_1, \pi'_2) = (a+2, b) \]
for some $(a, b) \in \{(0, 1), (0, 2), (1, 1)\}$.

We write $T(a, b)$ for the maximum of $\#I(\pi_1, \pi_2)$ over all
pairs $(\pi_1, \pi_2)$ of double covers such that $P(\pi_1, \pi_2) = (a, b)$.
By the uniform boundedness results mentioned in the introduction, this makes sense.
We then have the following relations.

\begin{corollary} \label{cor:explicit}
  \[ T(0, 4) = 2 T(2, 2) - 2 = 4 T(3, 1) - 6 \qquad\text{and}\qquad T(0, 2) = 2 T(2, 1) - 2 \,. \]
\end{corollary}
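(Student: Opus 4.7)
The plan is to apply Proposition~\ref{prop:main} three times, once for each admissible value $(a,b) \in \{(0,1),(0,2),(1,1)\}$, using the relations $P(\pi_1,\pi_2) = (2a,2b)$ and $P(\pi'_1,\pi'_2) = (a+2,b)$ recorded just before the corollary.

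First I would invoke the proposition with $(a,b) = (0,1)$. The forward direction associates to any pair $(\pi_1, \pi_2)$ with $P(\pi_1, \pi_2) = (0, 2)$ a pair $(\pi'_1, \pi'_2)$ with $P(\pi'_1, \pi'_2) = (2, 1)$ satisfying $\#I(\pi_1, \pi_2) = 2\#I(\pi'_1, \pi'_2) - 2$. Conversely, every pair with $P = (2,1)$ has $\#(\pi'_1(E'_1[2]) \cap \pi'_2(E'_2[2])) = 2 \geq 2$, so the converse part of Proposition~\ref{prop:main} produces a pair $(\pi_1,\pi_2)$ with $P(\pi_1,\pi_2) = (0,2)$ and the same intersection-size relation. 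Since both constructions preserve maxima, taking $\max$ of both sides gives $T(0,2) = 2T(2,1) - 2$. Applying the same argument with $(a,b) = (0,2)$ yields $T(0,4) = 2T(2,2) - 2$, and with $(a,b) = (1,1)$ yields $T(2,2) = 2T(3,1) - 2$. Substituting the third relation into the second gives
\[
  T(0,4) = 2T(2,2) - 2 = 2\bigl(2T(3,1) - 2\bigr) - 2 = 4T(3,1) - 6,
\]
completing the corollary.

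The main point requiring care is that the correspondence really sets up a bijection between the two classes of pairs, so that maxima transfer faithfully. This amounts to checking that the pair $(\pi_1,\pi_2)$ produced from a given $(\pi'_1,\pi'_2)$ by the reverse construction indeed has invariant $(2a,2b)$, with no additional common $2$-torsion $x$-coordinates and no additional shared elements of $G(\pi_1) \cap G(\pi_2)$ beyond those forced by the construction. The explicit normal forms $E'_j : y^2 = x(x-s_j)(x-t_j)$ and $E_j : y^2 = (x^2 - s_j)(x^2 - t_j)$ used in the proof of Proposition~\ref{prop:main} make this verification direct: the $2$-torsion $x$-coordinates and the Klein four-group action on $\mathbb{P}^1_x$ can be read off, and the doubling/halving relations for the two invariants come out exactly as stated, so no additional coincidences are introduced.
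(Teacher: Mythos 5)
Your proof is correct and is essentially the intended argument: the paper gives no separate proof of the corollary, but the preceding paragraph (stating the relations $P(\pi_1,\pi_2)=(2a,2b)$, $P(\pi'_1,\pi'_2)=(a+2,b)$ for $(a,b)\in\{(0,1),(0,2),(1,1)\}$) together with Proposition~\ref{prop:main} is exactly what you use. Your instantiations for the three values of $(a,b)$, the observation that the intersection-size relation and the forward/converse constructions propagate to the suprema, and the final substitution $T(0,4)=2T(2,2)-2=2(2T(3,1)-2)-2=4T(3,1)-6$ are all as the authors intend.

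One small point worth making explicit, which you flag but do not carry out: Proposition~\ref{prop:main} itself only proves the relation between the first components of $P(\pi_1,\pi_2)$ and $P(\pi'_1,\pi'_2)$ (via the displayed equality of intersection sizes of the $2$-torsion branch loci). The doubling of the second component, $\#(G(\pi_1)\cap G(\pi_2))=2\,\#(G(\pi'_1)\cap G(\pi'_2))$, is asserted only in the surrounding prose. It does indeed follow from the normal forms $E_j\colon y^2=(x^2-s_j)(x^2-t_j)$ and $E'_j\colon y^2=x(x-s_j)(x-t_j)$: one has $G(\pi_j)=\{\id,\ x\mapsto -x,\ x\mapsto \pm\sqrt{s_jt_j}/x\}$ and $G(\pi'_j)\ni x\mapsto s_jt_j/x$, and the coincidence $s_1t_1=s_2t_2$ governs whether the shared subgroup has order $2$ or $4$ on one side and $1$ or $2$ on the other (the remaining elements of $G(\pi'_1)\cap G(\pi'_2)$ cannot coincide without forcing $\{s_1,t_1\}=\{s_2,t_2\}$, which is excluded). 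Since you do appeal to this relation in passing from the construction to the invariant $(2a,2b)$, it would strengthen the write-up to include this short verification rather than only asserting that it "can be read off."
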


This suggests that the maximal size of $\#I(\pi_1, \pi_2)$ is
obtained when $P(\pi_1, \pi_2) = (0, 4)$.

%==========================================================================

\section{Relation with genus $2$ torsion packets} \label{S:relg2}

Let $C$ be a curve of genus $g \ge 2$. Recall that a \emph{torsion packet}
on~$C$ is a maximal subset $T \subset C$ such that the difference of any two
points in~$T$, considered as a point on the Jacobian variety~$J$ of~$C$,
has finite order. Raynaud~\cite{Raynaud} proved that a torsion packet is always finite
(this was the statement of the Manin-Mumford conjecture); a nice and
short proof, based on a deep result of Serre, can be found in~\cite{BakerRibet2003}.
When $C$ is a hyperelliptic curve, then its
\emph{hyperelliptic torsion packet} is the torsion packet
that contains the ramification points of the hyperelliptic double
cover $C \to\PP^1$. We now assume that $C$ has genus~$2$ (and is therefore
in particular hyperelliptic). The curve $C$ is \emph{bielliptic} if there is a double
cover $\psi \colon C \to E$ with $E$ an elliptic curve. This is equivalent to the
existence of an ``extra involution''~$\alpha$ in~$\Aut(C)$, i.e.,
an involution distinct from the hyperelliptic involution~$\iota$, which is
the involution associated to the hyperelliptic double cover $C \to \PP^1$.
Then $E = C/\langle \alpha \rangle$, and there is another double
cover $\psi' \colon C \to C/\langle \alpha \iota \rangle = E'$, where $E'$ is also
an elliptic curve. The involution~$\alpha$ induces an involution
of~$\PP^1$ (since $\iota$ is central in the automorphism group of~$C$),
which we can take to be $x \mapsto -x$. In this case, $C$ can be given by an
equation of the form
\[ C \colon y^2 = (x^2 - u) (x^2 - v) (x^2 - w) \]
with $u$, $v$, $w$ distinct and nonzero, with $\alpha(x,y) = (-x,y)$.
Then we have
\begin{align*}
  E &\colon y^2 = (x - u) (x - v) (x - w) & & \text{with} \quad \psi(x, y) = (x^2, y) \,, \\
  E' &\colon y^2 = x (x - u) (x - v) (x - w) & & \text{with} \quad \psi'(x, y) = (x^2, x y)\,.
\end{align*}
There is an obvious birational morphism induced by $\psi$ and~$\psi'$,
\[ C \to \{(x, y, y') : (x, y) \in E,  (x, y') \in E'\} = E \times_{\PP^1} E' \,, \]
where the morphisms $\pi \colon E \to \PP^1$, $\pi' \colon E' \to \PP^1$
in the fibered product are the $x$-coordinate maps. We see that $P(\pi, \pi') = (3, 1)$.
(The morphism $C \to E \times_{\PP^1} E'$ is injective outside the ramification
points of the hyperelliptic double cover, which are identified in pairs.)

\begin{proposition} \label{prop:g2}
  In the situation described above, let $\rho \colon C \to \PP^1$ be the
  natural map obtained via $C \to E \times_{\PP^1} E' \to \PP^1$.
  Then the hyperelliptic torsion packet of~$C$ is the full preimage under~$\rho$
  of~$I(\pi, \pi')$. In particular, its size is
  \[ 4\#I(\pi, \pi') - 6 - 2 \#\bigl(I(\pi, \pi') \cap \{0, \infty\}\bigr) \,. \]
\end{proposition}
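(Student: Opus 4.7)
The plan is to argue in two steps: first identify the hyperelliptic torsion packet with $\rho^{-1}(I(\pi,\pi'))$ using the Jacobian of $C$, then count preimages using the ramification of $\rho$.

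For the first step, the crucial input I would invoke is that $(\psi_*, \psi'_*) \colon J(C) \to E \times E'$ is an isogeny, a standard fact for bielliptic curves of genus~$2$. Taking a Weierstrass point $W_0 \in C$ as base point of the Abel-Jacobi embedding, a point $P \in C$ lies in the hyperelliptic torsion packet iff $[P - W_0] \in J(C)_{\tors}$. Now $\psi(W_0)$ is a ramification point of $\pi$, hence lies in $E[2]$, and likewise $\psi'(W_0) \in E'[2]$; so the isogeny reduces the torsion condition to $\psi(P) \in E_{\tors}$ and $\psi'(P) \in E'_{\tors}$. Since the hyperelliptic involutions on $E$ and $E'$ preserve torsion (they act as negation with respect to a Weierstrass origin), each of these conditions depends only on $\rho(P)$, and combined they become $\rho(P) \in \pi(E_{\tors}) \cap \pi'(E'_{\tors}) = I(\pi,\pi')$.

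For the second step, $\rho$ is the composition of two double covers, hence of degree~$4$, ramified exactly over $\{0, \infty, u, v, w\}$. Over each of $u, v, w$ the fiber is a pair of Weierstrass points, e.g.\ $(\pm\sqrt{u},0)$, each with ramification index~$2$ contributed by $\pi$; over $0$ and $\infty$ the ramification comes instead from the squaring map $x \mapsto x^2$, again giving two preimages each with index~$2$. Every other fiber consists of $4$ distinct points. Since $u, v, w$ always lie in $I(\pi,\pi')$ (being common $2$-torsion $x$-coordinates), writing $n = \#I(\pi,\pi')$ and $e = \#\bigl(I(\pi,\pi') \cap \{0,\infty\}\bigr)$, the size of $\rho^{-1}(I(\pi,\pi'))$ is
\[ 3\cdot 2 + e\cdot 2 + (n - 3 - e)\cdot 4 = 4n - 6 - 2e, \]
as required.

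The main work is in the first step: the key non-trivial ingredient is the isogeny $(\psi_*, \psi'_*) \colon J(C) \to E \times E'$ together with the simple but essential observation that $\psi(W_0)$ and $\psi'(W_0)$ are $2$-torsion. Once one also knows that the hyperelliptic involution on each elliptic quotient preserves torsion, the torsion-packet condition on $P$ really does factor through $\rho(P)$. The counting in the second step is then a matter of bookkeeping after verifying that $\rho$ is unramified outside $\{0,\infty,u,v,w\}$ and that each of these fibers consists of exactly two points.
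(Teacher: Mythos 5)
Your proof is correct and takes essentially the same approach as the paper: the key input is the isogeny $J(C) \to E \times E'$ induced by $(\psi,\psi')$, and the size formula follows from counting the ramification of the degree-$4$ map $\rho$ over $\{0,\infty,u,v,w\}$. The only cosmetic difference is that you present the identification as an equivalence chain and neutralize the base point by noting $\psi(W_0), \psi'(W_0)$ are $2$-torsion, whereas the paper argues the two inclusions separately after shifting the origin of $E \times E'$ to $(\psi(W),\psi'(W))$; these are equivalent.
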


\begin{proof}
  Under the map $\psi \times \psi' \colon C \to E \times E'$,
  the fixed points of~$\iota$ on~$C$ are mapped to $2$-torsion points
  on $E \times E'$.
  The map $C \to E \times E'$ induces an isogeny $J \to E \times E'$
  such that the following diagram commutes, where the map on the lower left
  is the diagonal inclusion.
  \[ \xymatrix{C \ar[rr] \ar[drr]^{\psi \times \psi'} \ar[d]_{\rho} & & J \ar[d] \\
                \PP^1 \ar[r]_-{\operatorname{diag}} & \PP^1 \times \PP^1 & E \times E' \ar[l]^-{(\pi, \pi')}}
  \]
  Here the embedding of $C$ into~$J$ is $P \mapsto [P - W]$, where $W$ is a fixed
  ramification point, and we take the origin on $E \times E'$ to be the image of~$W$.

  If $P$ is a point in the hyperelliptic torsion packet,
  then $[P - W]$ is torsion in~$J$. This implies that $(\psi(P), \psi'(P))$
  is torsion on $E \times E'$, so that $\rho(P) \in I(\pi, \pi')$.

  Conversely, consider $\xi \in I(\pi, \pi')$. Then there are torsion points
  $P \in E$ and $P' \in E'$ such that $\pi(P) = \pi'(P') = \xi$.
  Let $Q \in C$ be a point with $\rho(Q) = \xi$. Then $(\psi(Q), \psi'(Q)) = (\pm P, \pm P')$,
  and so $(\psi(Q), \psi'(Q))$ is torsion. But then $[Q - W] \in J$
  must be torsion as well, since its image under the isogeny $J \to E \times E'$
  is torsion. So $Q$ is in the hyperelliptic torsion packet of~$C$.

  For the last statement, note that $\rho$ (which is $(x,y) \mapsto x^2$)
  has degree~$4$ and ramifies
  \begin{enumerate}[(1)]
    \item at the four points on~$C$ with $x = 0$ or $x = \infty$, and
    \item at the six ramification points of the hyperelliptic double cover,
          which map two-to-one onto $\pi(E[2]) \cap \pi'(E'[2])$.
    \qedhere
  \end{enumerate}
\end{proof}

We can reverse this construction.

\begin{proposition} \label{prop:conv}
  Let $\pi \colon E \to \PP^1$ and $\pi' \colon E' \to \PP^1$ be
  double covers such that $P(\pi, \pi') = (3, 1)$. Then there is a
  bielliptic genus~$2$ curve~$C$ such that $C$, $E$ and~$E'$ fit into
  a diagram as in the proof of Proposition~\ref{prop:g2}.
\end{proposition}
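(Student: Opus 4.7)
The plan is to reverse the construction from the proof of Proposition~\ref{prop:g2}. The hypothesis $P(\pi, \pi') = (3, 1)$ tells us that the four-element branch loci $\pi(E[2])$ and $\pi'(E'[2])$ share exactly three points; denote by $e$ and $e'$ the remaining branch points of $\pi$ and $\pi'$, respectively. These are distinct, since otherwise $\pi(E[2]) = \pi'(E'[2])$, which is excluded. First I would apply an automorphism of $\PP^1$ (replacing $\pi$ by $\mu \circ \pi$ and $\pi'$ by $\mu \circ \pi'$ for a suitable $\mu \in \PGL_2(\C)$) sending $e \mapsto \infty$ and $e' \mapsto 0$. After this normalization, the three common branch points become some $u, v, w \in \C^*$ that are pairwise distinct.

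Next, since over $\C$ a double cover of $\PP^1$ is determined up to isomorphism by its branch divisor, and any nonzero scalar in front of the right-hand side of a Weierstrass equation can be absorbed by rescaling $y$, I can write
\[ E \colon y^2 = (x - u)(x - v)(x - w) \qquad \text{and} \qquad E' \colon y^2 = x(x - u)(x - v)(x - w), \]
with $\pi$ and $\pi'$ the $x$-coordinate projections and the origins chosen as any of the ramification points (which is allowed, as noted at the start of Section~\ref{S:pairs}).

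I would then define
\[ C \colon y^2 = (x^2 - u)(x^2 - v)(x^2 - w). \]
Since the six values $\pm\sqrt{u}, \pm\sqrt{v}, \pm\sqrt{w}$ are pairwise distinct, $C$ is smooth of genus $2$; the involution $(x, y) \mapsto (-x, y)$ is distinct from the hyperelliptic one, so $C$ is bielliptic. A direct check shows that $\psi \colon (x, y) \mapsto (x^2, y)$ gives a degree-$2$ morphism $C \to E$, that $\psi' \colon (x, y) \mapsto (x^2, xy)$ gives a degree-$2$ morphism $C \to E'$, and that $\pi \circ \psi = \pi' \circ \psi'$ equals $\rho \colon (x, y) \mapsto x^2$. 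These are precisely the ingredients of the diagram used in Proposition~\ref{prop:g2}.

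There is no serious obstacle; the whole argument is essentially just running the construction of Section~\ref{S:relg2} backwards. The only delicate point is the initial Möbius normalization, which relies on having $e \ne e'$ in order to send both unshared branch points to the prescribed points $\infty$ and $0$ simultaneously.
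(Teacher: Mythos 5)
Your proof is correct and takes essentially the same approach as the paper: normalize so that the unshared branch point of $\pi$ goes to $\infty$ and that of $\pi'$ goes to $0$, then write down $C \colon y^2 = (x^2-u)(x^2-v)(x^2-w)$. The paper states this more tersely, but your added details (the observation that $e \neq e'$, the verification that $C$ is smooth bielliptic, and the explicit check of the maps $\psi$, $\psi'$, $\rho$) are exactly the steps the paper leaves implicit.
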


\begin{proof}
  By moving the fourth branch point of~$\pi$ to~$\infty$ and the
  fourth branch point of~$\pi'$ to~$0$, we can assume that
  $E$ and~$E'$ are as above, with $\pi$ and~$\pi'$ the $x$-coordinate
  maps, i.e.,
  \[ E \colon y^2 = (x - u) (x - v) (x - w) \qquad\text{and}\qquad
     E' \colon y^2 = x (x - u) (x - v) (x - w) \,.
  \]
  Then
  \[ C \colon y^2 = (x^2 - u) (x^2 - v) (x^2 - w) \]
  is the required curve.
\end{proof}

We denote by~$T$ the maximal size of a hyperelliptic torsion packet
on a bielliptic curve of genus~$2$.

As mentioned in the introduction, the correspondence established
in Propositions \ref{prop:g2} and~\ref{prop:conv} is used by
DeMarco, Krieger, and Ye~\cite{DeMarcoKriegerYe}*{\S9} to deduce a bound on~$T$
from a bound~$B$ on~$\#I(\pi_1, \pi_2)$ when $P(\pi_1, \pi_2) = (3, 1)$.
Their bound is $T \le 16 B$; we can improve this as follows.

\begin{corollary} \label{cor:I-and-T}
  \[ 4 T(3,1) - 6 \ge T \qquad\text{and therefore}\qquad T(0,4) \ge T \ge 34 \,. \]
\end{corollary}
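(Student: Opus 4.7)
The plan is to chain together Proposition~\ref{prop:g2}, Corollary~\ref{cor:explicit}, and the explicit construction of Section~\ref{S:tp}. The argument is short; essentially nothing is hard once the correspondences are set up, so I would spend most of the space making the logical steps transparent.

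First I would prove the inequality $4T(3,1) - 6 \ge T$. Pick a bielliptic genus~$2$ curve~$C$ whose hyperelliptic torsion packet achieves the maximum size~$T$. By Proposition~\ref{prop:conv} (really by the construction in its proof, which starts from any bielliptic model $y^2 = (x^2-u)(x^2-v)(x^2-w)$), there is an associated pair $(\pi, \pi')$ with $P(\pi, \pi') = (3,1)$ whose intersection controls the packet via Proposition~\ref{prop:g2}: the packet has size
\[ 4\#I(\pi, \pi') - 6 - 2\#\bigl(I(\pi, \pi') \cap \{0, \infty\}\bigr) \,. \]
Since the correction term $2\#(I(\pi, \pi') \cap \{0, \infty\})$ is non-negative, and $\#I(\pi, \pi') \le T(3,1)$ by the definition of $T(3,1)$, this gives $T \le 4 T(3,1) - 6$, which is the first claim.

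For the chain $T(0,4) \ge T \ge 34$, I would combine the step above with Corollary~\ref{cor:explicit}, which contains the identity $T(0,4) = 4T(3,1) - 6$. Putting these two together yields $T(0,4) = 4T(3,1) - 6 \ge T$. The lower bound $T \ge 34$ is where the real content sits: it requires exhibiting a bielliptic genus~$2$ curve with a hyperelliptic torsion packet of size~$34$. I would cite the explicit example of~\cite{Stoll34}, whose construction is detailed in Section~\ref{S:tp}; this example is of the form $y^2 = (x^2-u)(x^2-v)(x^2-w)$ (i.e., it has the extra involution $x \mapsto -x$), so it is indeed bielliptic, and hence $T \ge 34$.

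The only genuine obstacle in the proof is that we rely on the existence of the size-$34$ packet. The logical manipulation with the formula from Proposition~\ref{prop:g2} and the identity from Corollary~\ref{cor:explicit} is straightforward; everything non-trivial is deferred to Section~\ref{S:tp}, where the record example is produced.
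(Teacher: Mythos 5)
Your proof is correct and follows essentially the same route as the paper: chain Proposition~\ref{prop:g2} (with the surrounding construction and Proposition~\ref{prop:conv}) to get $T \le 4T(3,1)-6$, use Corollary~\ref{cor:explicit} to identify $T(0,4) = 4T(3,1)-6$, and invoke the explicit size-$34$ example of Theorem~\ref{thm:g2_34} for the lower bound. One small factual slip worth flagging: the record curve $y^2 = x^6 + 130x^3 + 13$ is \emph{not} of the form $y^2 = (x^2-u)(x^2-v)(x^2-w)$ in the given model (the right-hand side is not even in~$x$); its extra involution is of the shape $(x,y) \mapsto (c/x,\, dy/x^3)$ with $c^3 = d^2 = 13$, as discussed in Section~\ref{S:tp} for the family $x^6 + tx^3 + 1$. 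It is bielliptic, which is all your argument needs, and after a M\"obius change of coordinate the involution can be moved to $x \mapsto -x$ so that a model of the stated form exists; but the parenthetical ``i.e.'' as written is inaccurate.
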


\begin{proof}
  The first statement follows from the previous two propositions.
  The first inequality in the second statement then follows by Corollary~\ref{cor:explicit},
  and the last inequality comes from the example of
  Theorem~\ref{thm:g2_34}.
\end{proof}

It is clear that Corollary \ref{cor:I-and-T} implies Theorem \ref{Trel} immediately.

\begin{proof}[Proof of Theorem \ref{T22}]
  By the main result of~\cite{Poonen}, there are infinitely many
  (pairwise non-isomorphic) bielliptic genus~$2$ curves with a hyperelliptic
  torsion packet of size at least~$22$. By Proposition~\ref{prop:g2} and
  the construction in the proof of Proposition~\ref{prop:main}
  (applied twice backwards), we obtain corresponding pairs $(\pi_1, \pi_2)$.
\end{proof}

Theorem \ref{thm:g2_34} gives an example in Poonen's family with a hyperelliptic torsion packet of size $34$. By the same argument, we can show that there exists $(\pi_1, \pi_2)$ such that $\#I(\pi_1, \pi_2)=34$. This gives a proof of Theorem \ref{T34}, except for the claim on the orders of the torsion points with common $x$-coordinate. We will provide this missing part with more computational details at the end of Section \ref{S:subfamily}.

%==========================================================================

\section{Torsion packets on curves of genus $2$} \label{S:tp}

In this section, we explain how we found the example~\cite{Stoll34}
of a hyperelliptic torsion packet of size~$34$ on a curve of genus~$2$.
The approach is based on Poonen's result in~\cite{Poonen}, where he shows
that there are infinitely many essentially distinct curves of genus~$2$
with hyperelliptic torsion packets of size at least~$22$. We first give
a rough sketch of the idea behind Poonen's result.

One important fact is that the hyperelliptic torsion packet of a
hyperelliptic curve~$C$ is invariant under the automorphism group of~$C$.
This is because the automorphism group fixes the set of ramification points
of the hyperelliptic double cover and the difference of any two ramification
points is a torsion point (of order dividing~$2$). This implies that the
points in the hyperelliptic torsion packet come in orbits under the
automorphism group.

Now consider the moduli space of curves of genus~$2$. It has dimension~$3$.
We can (in principle) write down the condition that some torsion point
of given order~$n$ is in the image of the curve (we fix one of the ramification
points as the base point of the embedding of the curve into its Jacobian);
this gives a codimension-$1$ condition: we want to force one of finitely
many points on the Jacobian of dimension~$2$ to lie on a subvariety of
dimension $1$ and hence also codimension~$1$. So we can expect to find curves
with a hyperelliptic torsion packet of size at least $6 + 3 \cdot 2 = 12$,
where the first summand counts the ramification points, which are always
in the hyperelliptic torsion packet, and the second comes from the idea
that we can impose three independent torsion points onto the curve,
each giving us another one for free, since the hyperelliptic involution
is a nontrivial automorphism. Barring accidents (or ``unlikely intersections''),
we would not expect more than that. Such accidents do indeed occur,
as the following example shows.

\begin{example}
  The curve
  \[ C \colon y^2 = 4 x^6 - 12 x^5 - 3 x^4 + 46 x^3 - 15 x^2 - 24 x + 40 \]
  has minimal (geometric) automorphism group and a hyperelliptic
  torsion packet of size~$18$ (which is larger by~$6$ than the number
  we expect to find infinitely often).

  The first statement can be checked by looking at the invariants
  of~$C$; for the second we can use Poonen's program for computing
  torsion packets described in~\cite{Poonen2001}.
\end{example}

We can try to do better than what we can expect in the generic case
by considering subfamilies of
genus~$2$ curves that have a larger automorphism group. This gives us
more points ``for free'' for any torsion point we get on the curve.
On the other hand, the corresponding moduli spaces have smaller dimension,
so we cannot force as many orbits of torsion points on the curve as
in the generic case. These considerations lead to the following table,
which lists the relevant data for each possible automorphism group.
We describe it by specifying the reduced automorphism group
$\Aut(C)/\langle \iota \rangle$, which is a group of automorphisms of~$\PP^1$.

\[ \renewcommand{\arraystretch}{1.25}
   \begin{array}{|c|c|c|c|c|c|}\hline
     f               & \Aut(C)/\langle \iota \rangle & \#\!\Aut(C) & \dim M & \#T_{\min} & \#T \\\hline
     \text{generic}  & \{\id\} &  2 &      3 &          6 & 12 + 2 \delta \\
     x^6 + s x^4 + t x^2 + 1 & C_2  & 4 &  2 &          6 & 14 + 4 \delta \\
     x^5 + t x^3 + x & C_2 \times C_2 & 8 & 1 &         6 & 14 + 8 \delta \\
     x^6 + t x^3 + 1 & S_3     & 12 &      1 &         10 & \mathbf{22} + 12 \delta \\\hline
     x^6 + 1         & D_6     & 24 &      0 &         10 & 10 \\
     x^5 + x         & S_4     & 48 &      0 &         22 & \mathbf{22} \\
     x^5 + 1         & C_5     & 10 &      0 &         18 & 18 \\\hline
   \end{array}
\]

The curve can be given by an equation of the form $y^2 = f(x)$. $\dim M$
is the dimension of the moduli space of curves admitting (at least) the
given automorphism group, $\#T_{\min}$ is the size of the generic hyperelliptic
torsion packet in the family, and
\[ \#T = \#T_{\min} + (\dim M + \delta) \cdot \#\Aut(C) \]
is the size we can expect after forcing $\dim M$ orbits of torsion points;
$\delta$ counts the number of
additional full orbits we may be able to obtain ($\delta$ can be non-integral
when there are nontrivial stabilizers).

The second line, with reduced automorphism group~$C_2$, corresponds to
the bielliptic curves that feature in Section~\ref{S:relg2}. We can
have $\delta > 0$ in this case, too.

\begin{example}
  The curve
  \[ C \colon y^2 = (-9 \sqrt{3} + 16) x^6 + (-63 \sqrt{3} + 113) x^4 + (13 \sqrt{3} - 38) x^2 + 3 \sqrt{3} + 9 \]
  has $\Aut(C) \cong C_2 \times C_2$ (and so is generic bielliptic)
  and a hyperelliptic torsion packet of size at least~$18$.

  The claim on~$\Aut(C)$ can be shown in a similar way as for the preceding
  example. The torsion packet contains the six ramification points
  and the points with $x$-coordinates $\pm\sqrt{3}$, $\pm\sqrt{3}/3$, and $\pm(\sqrt{3}+2)$.
  We cannot use Poonen's program for this curve, since it requires the
  curve to be defined over~$\Q$.
\end{example}

The last three lines in the table correspond to a single point each
in the moduli space of curves of genus~$2$. The most interesting case
is the family with reduced automorphism group~$S_3$, which has one parameter
and has the additional benefit that there are four additional torsion points
on the curve throughout the family: the points at infinity and the points
with $x$-coordinate zero give points of order dividing~$6$ (they form an
orbit of size~$4$).
So we get $10$ points as our baseline, and we should be able to force one
full orbit of size~$12$ of torsion points on the curve in addition.
This is precisely what Poonen proves.

Note that the curves in this family are also bielliptic (an extra
involution is given by $(x, y) \mapsto (1/x, y/x^3)$). So we can use
the correspondence between torsion packets and sets $I(\pi_1, \pi_2)$
that is described in Section~\ref{S:relg2}. What we do is essentially
the following. For each $n \ge 3$ up to some bound, we compute the $n$-division
polynomials $h_{1,n}(t,x)$ and~$h_{2,n}(t,x)$ of the two elliptic curves
(which depend on the parameter~$t$). Then we compute the resultants
$R_{m,n}(t) = \Res_x\bigl(h_{1,m}(t,x), h_{2,n}(t,x)\bigr)$, which are
rational functions (in fact, polynomials) in~$t$. A root $t \neq \pm 2$
of~$R_{m,n}$ then gives us a parameter value such that the corresponding
curve~$C_t$ has an additional orbit of points in its hyperelliptic
torsion packet. If the orbit does not contain fixed points of some
nontrivial automorphism, then the torsion packet has size at least
$10 + 12 = 22$.

We now search for common irreducible factors among the various~$R_{m,n}$.
If we find such a common factor, then its roots give us curves~$C_t$
with \emph{two} additional orbits in the hyperelliptic torsion packet.
And indeed, we do find one such coincidence, which gives the curve
given in~\cite{Stoll34}. We state this result.

\begin{theorem} \label{thm:g2_34}
  The curve
  \[ C \colon y^2 = x^6 + 130 x^3 + 13 \]
  has a hyperelliptic torsion packet of size~$34$. It consists of the
  ramification points for the hyperelliptic double cover, the points
  at infinity, the points with $x$-coordinate zero, and the points
  whose $x$-coordinates satisfy the equation
  \[ x^{12} - 91 x^9 - 273 x^6 - 1183 x^3 + 169 = 0 \,. \]
\end{theorem}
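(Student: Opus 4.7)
The plan is to realize $C$ as a member of the $S_3$-family $y^2 = x^6 + t x^3 + 1$ and apply the mechanism explained in the paragraph preceding the theorem, combining the heuristic count in the last row of the table with an explicit verification. The substitution $x \mapsto 13^{1/6} x$, $y \mapsto \sqrt{13}\, y$ carries $C$ to $y^2 = x^6 + 10\sqrt{13}\, x^3 + 1$, so $C$ corresponds to the parameter value $t = 10\sqrt{13}$ and $\Aut(C)$ contains $D_6$ (generated by $\iota$, the order-$3$ automorphism $(x,y) \mapsto (\zeta_3 x, y)$, and the extra involution $(x,y) \mapsto (13^{1/3}/x, 13^{1/2} y/x^3)$).

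First I would identify the ten ``baseline'' points of the hyperelliptic torsion packet that are present throughout the family: the six Weierstrass points, the two points at infinity, and the two points $(0, \pm\sqrt{13})$ above $x = 0$. That the last four lie in the torsion packet is a direct computation: their images in $J(C)$ (relative to a Weierstrass base point) have order dividing~$6$, as witnessed by explicit rational functions on~$C$.

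Next I would apply the bielliptic construction of Section~\ref{S:relg2} using the extra involution to obtain two elliptic curves $E_1, E_2$ with maps $\pi_1, \pi_2 \colon E_j \to \PP^1$ such that $P(\pi_1, \pi_2) = (3,1)$; by Proposition~\ref{prop:g2} the hyperelliptic torsion packet of $C$ equals $\rho^{-1}\bigl(I(\pi_1, \pi_2)\bigr)$. I would then compute the resultants $R_{m,n}(t) = \Res_x\bigl(h_{1,m}(t,x), h_{2,n}(t,x)\bigr)$ of the division polynomials of $E_1$ and~$E_2$ for small $m, n$ and confirm by computer algebra that $t = 10\sqrt{13}$ is a common root of two different $R_{m,n}$. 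The corresponding common factors in~$x$, pulled back to~$C$, recover precisely the polynomial $x^{12} - 91 x^9 - 273 x^6 - 1183 x^3 + 169$; its roots split into two $S_3$-orbits of size~$6$ under the reduced automorphism group (the symmetry $u \mapsto 13/u$ on $u = x^3$ organizes them into two quadratic factors over $\Q(\sqrt{13})$), each yielding a full $\Aut(C)$-orbit of $12$ points on~$C$. This adds $24$ torsion-packet points and gives the lower bound $\#T \geq 34$.

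The main obstacle is the matching upper bound $\#T \leq 34$. Since $C$ is defined over~$\Q$, this can be settled by running Poonen's algorithm from \cite{Poonen2001}, which rigorously produces the complete hyperelliptic torsion packet of a genus-$2$ curve over~$\Q$ and certifies that no further orbits enter. Without such an effective tool, one would be reduced to an unlikely-intersection argument to rule out additional coincidences, which is beyond the scope of the paper.
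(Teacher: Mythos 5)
Your proposal is correct and its decisive step is the same as the paper's: the paper's entire proof is a one-line appeal to Poonen's pari/gp program \cite{Poonen2001}, which rigorously computes the full hyperelliptic torsion packet of a genus-$2$ curve over~$\Q$ and thereby certifies both the membership of the listed points and that no others occur. The constructive lower-bound argument you sketch (identifying $t = 10\sqrt{13}$, the ten baseline points, the bielliptic reduction, and the coincidence in the division-polynomial resultants $R_{m,n}$) is sound, but it is really a reconstruction of the discovery mechanism laid out in the paragraphs of Section~\ref{S:tp} that precede the theorem; it does not replace the final verification, and indeed you yourself fall back on Poonen's algorithm for the upper bound. So the approach is essentially the paper's, with welcome extra context but no independent route to the result.
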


\begin{proof}
  This can be shown using Poonen's pari/gp program for computing
  hyperelliptic torsion packets on genus~$2$ curves defined over~$\Q$;
  see~\cite{Poonen2001}.
\end{proof}

%==========================================================================

\section{Poonen's family} \label{S:subfamily}

As mentioned above, in \cite{Poonen}, Poonen considers the subfamily
\[ C_t \colon y^2 = (x^3 - 1)(x^3 - t^{12}) \]
of the family of all bielliptic curves of genus~$2$
and shows that there are infinitely many~$t$ such that the hyperelliptic torsion
packet of~$C_t$ has at least~$22$ points. (Here we use the $12$-th power to
avoid radicals in the sequel.) More precisely, let
\[ \iota \colon (x,y) \mapsto (x,-y)\,, \quad
   \sigma \colon (x,y) \mapsto (\zeta_{3}x,y)\,, \quad
   \tau \colon (x,y) \mapsto \left(\frac{t^{4}}{x},\frac{t^{6}y}{x^{3}}\right) \,,
\]
where $\zeta_3$ is a primitive third root of unity;
these are automorphisms of~$C_t$ satisfying
\[ \iota^2 = \sigma^3 = \tau^2 = \id\,, \quad
   \iota \tau = \tau \iota\,, \quad
   \iota \sigma = \sigma \iota \quad\text{and}\quad \sigma \tau = \tau \sigma^2 \,.
\]
As explained in the previous section, the $22$~points are
\begin{enumerate}[(1)]
  \item the six Weierstrass points (the fixed points of~$\iota$),
  \item the four points $0^+ = (0, t^6)$, $0^- = (0, -t^6)$, $\infty^+$, $\infty^-$
        (the fixed points of~$\sigma$), and
  \item a full length orbit of the group $\langle \iota, \sigma, \tau \rangle$ of order~$12$.
\end{enumerate}

As explained earlier in this paper, from a bielliptic curve~$C$ of genus~$2$,
we can obtain a pair~$(\pi_1, \pi_2)$ with $P(\pi_1, \pi_2) = (0, 4)$
and such that the size of~$I(\pi_1, \pi_2)$ is usually the same as the
size of the hyperelliptic torsion packet of~$C$; see Corollary~\ref{cor:I-and-T}.
The goal of this section is to explain which pairs correspond to the
curves~$C_t$ in Poonen's family, and how the effect of the large
automorphism group of~$C_t$ is reflected in the structure of~$I(\pi_1, \pi_2)$.

Since $C_t$ is bielliptic, by the construction in Section \ref{S:relg2} we have two morphisms
\begin{align*}
  \phi_{1} \colon C_{t} & \To C_{t}/\left\langle \tau \right\rangle
                           \simeq E_{1} \colon y^{2}=(x-u)(x-v)(x-w) \\
   (x,y) & \longmapsto \left(\frac{(x-t^{2})^{2}}{(x+t^{2})^{2}},
                             \frac{8t^{3}y}{(t^{6}+1)(x+t^{2})^{3}}\right)
\end{align*}
and
\begin{align*}
  \phi_{2} \colon C_{t} & \To C_{t}/\left\langle \tau\iota \right\rangle
                           \simeq E_{2} \colon y^{2}=x(x-u)(x-v)(x-w)\\
  (x,y) & \longmapsto\left(\frac{(x-t^{2})^{2}}{(x+t^{2})^{2}},
                           \frac{8t^{3}(x-t^{2})y}{(t^{6}+1)(x+t^{2})^{4}}\right) \,.
\end{align*}
Here we choose the origin of~$E_1$ to be~$\infty$, and the origin of~$E_2$ to be~$(0,0)$.

We abuse notation and also write~$\sigma$ for the automorphism of~$J(C_t)$
induced by~$\sigma$. We use~$\theta_j$ to denote the homomorphism $J(C_t) \to E_j$
induced by~$\phi_j$, and we write $\theta_j^\vee \colon E_j \to J(C_t)$
for the map induced by pull-back of divisors under~$\phi_{j}$.

\begin{proposition} \label{isogeny}
  Let $T_1 = \phi_{1}(0^+)$ and $T_2 = \phi_{2}(0^+)$. Then $T_1$ and~$T_2$
  have order~$3$. Moreover,
  \[ \varphi_{12} = \theta_{2} \circ \sigma \circ \theta_{1}^\vee \colon E_1 \To E_2
     \qquad\text{and}\qquad
     \varphi_{21} = \theta_{1} \circ \sigma \circ \theta_{2}^\vee \colon E_2 \To E_1
  \]
  are isogenies with $\ker(\varphi_{12}) = \langle T_1 \rangle$ and
  $\ker(\varphi_{21}) = \langle T_2 \rangle$.
\end{proposition}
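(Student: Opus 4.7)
The plan is to exploit the $S_3$-action on $C_t$, using the order-$3$ automorphism $\sigma$ to produce 3-torsion classes on $J(C_t)$ and to pin down the degrees of $\varphi_{jk}$. The essential input is that on $H^0(\Omega^1_{C_t}) = \langle dx/y,\,x\,dx/y\rangle$ the action of $\sigma^*$ has eigenvalues $\zeta_3$ and $\zeta_3^2$, so that $\sigma^2+\sigma+1 = 0$ as an endomorphism of $J(C_t)$; in particular, every $\sigma$-invariant class is 3-torsion. Crucially, the four points $0^\pm$ and $\infty^\pm$ are all fixed by $\sigma$.

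\emph{Order of $T_j$.} I would first compute $T_j = (1, \pm 8t^3/(t^6+1))$ directly from the formulas for $\phi_j$, along with the identities $\phi_1(\infty^+) = T_1$, $\phi_1(0^-) = \phi_1(\infty^-) = -T_1$, $\phi_2(\infty^-) = T_2$, and $\phi_2(0^-) = \phi_2(\infty^+) = -T_2$. Using $\operatorname{div}(x\pm t^2)$ on $C_t$ to absorb the ramification divisors of $\phi_1$ and $\phi_2$, the pullbacks simplify to
\[ \theta_1^\vee(T_1) = [0^+] - [\infty^-], \qquad \theta_2^\vee(T_2) = [0^+] - [\infty^+] \quad\text{in } J(C_t). \]
Both classes are $\sigma$-invariant, hence 3-torsion; applying $\theta_j$ and $\theta_j\theta_j^\vee = [2]$ gives $6T_j = 0$. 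To upgrade to $3T_j = 0$, consider the $\sigma$-orbit $\{W_1,W_2,W_3\}$ of Weierstrass points of $C_t$ with $x^3=1$. The divisor $D = 3[0^+] - \sum_i[W_i]$ is also $\sigma$-invariant and thus 3-torsion, while the images $\phi_j(W_i)$ are the three non-origin 2-torsion points of $E_j$, which sum to zero. Hence $\theta_j(D) = 3T_j$, giving $9T_j = 0$; combined with $6T_j = 0$ we obtain $3T_j = 0$. Since $T_j$ is neither the origin nor a 2-torsion point of $E_j$, its order is exactly~$3$.

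\emph{Kernels of $\varphi_{jk}$.} The $\sigma$-invariance of $\theta_1^\vee(T_1) = [0^+]-[\infty^-]$ yields $\varphi_{12}(T_1) = \theta_2([0^+]-[\infty^-]) = T_2 - T_2 = 0$, and symmetrically $\varphi_{21}(T_2) = 0$; thus $\langle T_j\rangle \subseteq \ker\varphi_{jk}$ and both $\varphi_{jk}$ are nontrivial isogenies. For the equality of kernels it suffices to show $\deg(\varphi_{12})\deg(\varphi_{21}) = 9$. The crucial additional input is $\theta_2\theta_1^\vee = 0$: for $P \in E_1$ with $\phi_1^{-1}(P) = \{Q,\tau Q\}$, the involution $\tau$ descends via $\phi_2$ to the hyperelliptic involution $\iota_{E_2}$ on $E_2$, which equals negation with origin $(0,0)$, so $\phi_2(Q)+\phi_2(\tau Q) = 0$. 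Combined with $\theta_2^\vee\theta_2 = 1-\tau$ on $J(C_t)$ (since $\iota = -1$ on $J$), the identity $\sigma\tau\sigma = \tau$ in $\Aut(C_t)$ (from $\sigma\tau = \tau\sigma^2$ and $\sigma^3 = 1$), and $\theta_1\tau = \theta_1$, one computes
\[ \varphi_{21}\varphi_{12} = \theta_1\sigma(1-\tau)\sigma\theta_1^\vee = \theta_1\sigma^2\theta_1^\vee - [2] = -\alpha - [4], \]
where $\alpha = \theta_1\sigma\theta_1^\vee \in \operatorname{End}(E_1)$ and the last step uses $\theta_1(1+\sigma+\sigma^2)\theta_1^\vee = 0$.

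The main obstacle is showing $\alpha = [-1]$, which gives $\varphi_{21}\varphi_{12} = [-3]$ of degree~$9$. I would establish this on the tangent space. Let $\omega_\pm = (1\pm x/t^2)\,dx/y$; these are the $\pm 1$-eigenvectors of $\tau^*$ on $H^0(\Omega^1_{C_t})$, and $\phi_1^*$ identifies the generator $\omega_{E_1}$ of $H^0(\Omega^1_{E_1})$ with $\omega_-$. The cotangent map $(\theta_1^\vee)^* \colon H^0(\Omega^1_{C_t}) \to H^0(\Omega^1_{E_1})$ is the trace $\omega\mapsto\omega+\tau^*\omega$, which doubles $\omega_-$ and kills $\omega_+$. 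Expanding $\sigma^*\omega_-$ in the $\omega_\pm$-basis and applying $(\theta_1^\vee)^*$ shows that $\alpha^*\omega_{E_1} = -\omega_{E_1}$, so $\alpha$ acts as $-1$ on the tangent space; in characteristic zero this determines $\alpha = [-1]$. Combined with the lower bound $\deg\varphi_{jk}\geq 3$, the identity $\deg(\varphi_{21}\varphi_{12}) = 9$ then forces $\deg\varphi_{12} = \deg\varphi_{21} = 3$, yielding $\ker\varphi_{12} = \langle T_1\rangle$ and $\ker\varphi_{21} = \langle T_2\rangle$.
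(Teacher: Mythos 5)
Your proof is correct, but it takes a genuinely different route from the paper. The paper's argument is a single direct computation: taking $P\in E_1$, $\phi_1(Q)=P$, it shows $\varphi_{12}(P)=\phi_2(\sigma Q)-\phi_2(\sigma^2 Q)$ by manipulating divisor classes on $C_t$, then reads the kernel off this formula (the kernel condition $\phi_2(\sigma Q)=\phi_2(\sigma^2Q)$ reduces to $Q=\sigma Q$ or $Q=\tau\iota Q$, giving $P\in\{0,\pm T_1\}$, which simultaneously yields $\ker\varphi_{12}$ and the order of $T_1$). You instead work systematically in $\operatorname{End}(J(C_t))$ and $\operatorname{End}(E_1)$: you derive the order of $T_j$ from the $\sigma$-invariance of suitable divisor classes together with $1+\sigma_*+\sigma_*^2=0$, you establish $\langle T_j\rangle\subseteq\ker\varphi_{jk}$ by the same invariance, and then you close the argument by computing $\varphi_{21}\varphi_{12}=-\alpha-[4]$ with $\alpha=\theta_1\sigma_*\theta_1^\vee$ and pinning down $\alpha=[-1]$ via the cotangent action, yielding $\varphi_{21}\varphi_{12}=[-3]$ and hence equality of the kernels by a degree count. (Your identity $\varphi_{21}\varphi_{12}=[-3]$ is in fact a quick consequence of the paper's formula, using that $\phi_1(Q)+\phi_1(\sigma Q)+\phi_1(\sigma^2 Q)$ is constant; the paper just does not spell it out.) What you gain is the explicit structural identity and a reusable cotangent-space technique; what the paper gains is brevity, and it never needs to invoke degrees or the endomorphism ring. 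Two small points to tidy: the eigenvalues of $\tau^*$ on $\omega_\pm=(1\pm x/t^2)\,dx/y$ are $\mp1$, not $\pm1$ (one checks $\tau^*(dx/y)=-(x/t^2)\,dx/y$ and $\tau^*(x\,dx/y)=-t^2\,dx/y$), though this does not change that $\phi_1^*\omega_{E_1}$ is proportional to $\omega_-$ and the $\omega_-$-coefficient in $\sigma^*\omega_-$ is $(\zeta_3+\zeta_3^2)/2=-1/2$; and the relation $\theta_2\theta_1^\vee=0$, while true and morally in the same spirit, is not actually used in your degree computation, which relies only on $\theta_2^\vee\theta_2=1-\tau_*$. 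Also, the claim early on that the $\varphi_{jk}$ are nontrivial isogenies is only justified once you have $\varphi_{21}\varphi_{12}=[-3]$; the inclusion $\langle T_j\rangle\subseteq\ker\varphi_{jk}$ alone does not rule out $\varphi_{jk}=0$.
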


\begin{proof}
  It is clear that $\varphi_{12}$ is a homomorphism of elliptic curves.
  Let $P \in E_1$ and choose $Q, R \in C_t$ such that
  $\phi_{1}(Q) = P$ and $\phi_{1}(R) = \infty$
  (so $R = (-t^2, \pm t^3 (t^6+1))$ and $\iota \tau R = R$).
  Then, denoting the linear equivalence class of a divisor~$D$ on~$C_t$ by~$[D]$,
  \begin{align*}
    \varphi_{12}(P)
      &= (\theta_2 \circ \sigma)([Q + \tau Q - R - \tau R])
       = \theta_2([\sigma Q + \sigma \tau Q - \sigma R - \sigma \tau R]) \\
      &= \theta_2([\sigma Q + \tau \sigma^2 Q - \sigma R - \sigma \tau R])
       \stackrel{(*)}{=} \theta_2([\sigma Q - \iota \tau \sigma^2 Q - \sigma R + \iota \sigma \tau R]) \\
      &= \theta_2([\sigma Q - \iota \tau \sigma^2 Q - \sigma R + \sigma \iota \tau R])
       = \phi_{2}(\sigma Q) - \phi_{2}(\sigma^2 Q) \,.
  \end{align*}
  where at~$(*)$ we use that $[Q_1 - Q_2] = [-\iota Q_1 + \iota Q_2]$ in~$J(C_t)$,
  and for the last equality that $\iota \tau R = R$.
  So $P \in \ker(\varphi_{12})$ if and only if $\sigma Q = \sigma^2 Q$ or
  $\sigma Q = \tau \iota \sigma^2 Q = \sigma \tau \iota Q$;
  equivalently, $Q = \sigma Q$ or $Q = \tau \iota Q$. In the first case,
  $P = \pm T_1$, and in the second case, $P = \infty$. This also implies
  that $T_1$ has order~$3$.
  The claim regarding $\varphi_{21}$ follows in the same way.
\end{proof}

We will also need the following.

\begin{lemma} \label{L:help}
  With the notations introduced above, $\theta_j \circ \sigma \circ \theta_j^\vee$
  is multiplication by~$-1$ on~$E_j$.
\end{lemma}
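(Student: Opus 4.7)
The plan is to compute $\theta_j \sigma \theta_j^{\vee}(P)$ directly on divisors and reduce the claim to the constancy of a certain morphism $C_t \to E_j$. Writing $\alpha_1 = \tau$ and $\alpha_2 = \tau\iota$, so that each $\phi_j$ is the quotient by $\langle\alpha_j\rangle$ and in particular $\phi_j \alpha_j = \phi_j$, for $P \in E_j$ I would choose $Q, R \in C_t$ with $\phi_j(Q) = P$ and $\phi_j(R) = O_{E_j}$. Then by the definitions of pullback and pushforward,
\[ \theta_j^{\vee}(P) = [Q + \alpha_j Q - R - \alpha_j R], \]
and $\theta_j$ sends a degree-zero divisor class to the corresponding sum of points in the group law of~$E_j$.

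Next I would apply $\sigma$ to this class and then push forward by $\theta_j$. The key manipulation is the relation $\sigma\alpha_j = \alpha_j\sigma^{-1}$ in $\Aut(C_t)$, which holds for $j=1$ directly from $\sigma\tau = \tau\sigma^{2}$ and for $j=2$ with the additional use that $\iota$ commutes with $\sigma$. Combined with $\phi_j\alpha_j = \phi_j$, this rewrites every occurrence of $\phi_j(\sigma\alpha_j X)$ as $\phi_j(\sigma^{-1} X) = \phi_j(\sigma^{2} X)$, and yields
\[ \theta_j\sigma\theta_j^{\vee}(P) = \bigl(\phi_j(\sigma Q) + \phi_j(\sigma^{2} Q)\bigr) - \bigl(\phi_j(\sigma R) + \phi_j(\sigma^{2} R)\bigr) \]
in the group law of~$E_j$.

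To finish I would introduce the morphism $\Phi_j\colon C_t \to E_j$ defined by $X \mapsto \phi_j(X) + \phi_j(\sigma X) + \phi_j(\sigma^{2} X)$, which is $\sigma$-invariant by construction and therefore factors through $C_t/\langle\sigma\rangle$. The automorphism $\sigma\colon (x,y) \mapsto (\zeta_3 x, y)$ has exactly four fixed points on $C_t$ (the two points at infinity and the two points $0^{\pm}$ above $x=0$), so a Riemann--Hurwitz count gives $g(C_t/\langle\sigma\rangle) = 0$; hence every morphism $C_t/\langle\sigma\rangle \to E_j$ is constant, and so is $\Phi_j$. The identity $\Phi_j(Q) = \Phi_j(R)$ then rearranges, using $\phi_j(Q) = P$ and $\phi_j(R) = O_{E_j}$, to $\phi_j(\sigma Q) + \phi_j(\sigma^{2} Q) = \phi_j(\sigma R) + \phi_j(\sigma^{2} R) - P$, and substituting into the previous display gives $\theta_j \sigma \theta_j^{\vee}(P) = -P$, as required. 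The main point that will require care is the bookkeeping of $\sigma$, $\tau$ and $\iota$ in the first step; once the divisor-level identity is in hand, the constancy of $\Phi_j$ closes the argument without any further calculation.
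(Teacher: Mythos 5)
Your proof is correct and follows essentially the same route as the paper's: the identical divisor-level manipulation via $\sigma\tau = \tau\sigma^2$ (and its $\tau\iota$-variant) reduces everything to the fact that $C_t/\langle\sigma\rangle$ has genus~$0$. The only difference is cosmetic: the paper states the resulting fact as linear equivalence of the divisors $X + \sigma X + \sigma^2 X$ on~$C_t$ (identifying the quotient explicitly as $y^2 = (x-1)(x-t^{12})$), whereas you state it as constancy of the $\sigma$-invariant morphism $\Phi_j$ (computing the genus by Riemann--Hurwitz), which amounts to the same thing after applying $\theta_j$.
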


\begin{proof}
  We first observe that all divisors of the form $Q + \sigma Q + \sigma^2 Q$ on~$C_t$
  are linearly equivalent. This is because the quotient $C_t/\langle \sigma \rangle$
  is the curve $y^2 = (x - 1)(x - t^{12})$ of genus zero.

  Let $P \in E_1$ and choose $Q, R \in C_t$ as in the preceding proof. Then
  \begin{align*}
    (\theta_1 \circ \sigma \circ \theta_1^\vee)(P)
      &= (\theta_1 \circ \sigma)([Q + \tau Q - R - \tau R])
       = \theta_1([\sigma Q + \sigma \tau Q - \sigma R - \sigma \tau R]) \\
      &= \theta_1([\sigma Q + \tau \sigma^2 Q - \sigma R - \tau \sigma^2 R])
       = \theta_1([\sigma Q + \sigma^2 Q - \sigma R - \sigma^2 R]) \\
      &= \theta_1([R - Q]) = -P \,.
  \end{align*}
  The argument for~$E_2$ is identical.
\end{proof}

Now let
\[ E_{s} \colon y^{2} = (x^{2}-s^{2})\left(x^{2}-\frac{1}{s^{2}}\right) \quad\text{with origin $(s,0)$,} \]
and
\[ E_{s}' \colon y^{2} = x(x-1)\left(x-\frac{(s^{2}+1)^{2}}{4s^{2}}\right)
                    \quad\text{with origin $\displaystyle\left(\frac{(s^{2}+1)^{2}}{4s^{2}},0\right)$.} \]
We have an isogeny
\[ \psi \colon E_{s} \To E_{s}'\,, \qquad
               (x,y) \longmapsto \left(\frac{(x^{2}+1)^{2}}{4x^{2}}, \frac{(x^{4}-1)y}{8x^{3}}\right)\,.
\]
Note that $\ker(\psi) = E_{s}[2]$ (in particular, $E_s$ and $E'_s$ are isomorphic
as elliptic curves) and $\psi$ is independent of~$s$.

For any~$t$, we have the two covering maps $\phi_{j} \colon C_{t} \to E_{j}$.
By moving the three common $2$-torsion $x$-coordinates of $E_1$ and~$E_2$
to $\{0,1,\infty\}$, we can assume that $E_{j} = E_{s_{j}}'$
for suitable $s_1$ and~$s_2$. One of the four preimages under~$\psi$ of the point~$T_j$
of order~$3$ will also have order~$3$; let $S_j \in E_{s_j}$ be this point.
Since $T_1$ and~$T_2$ have the same $x$-coordinate (this is still true after
applying an automorphism of~$\PP^1$), the $x$-coordinates of $S_1$ and~$S_2$
are equal up to sign and/or taking inverses; by replacing $s_2$ by $\pm s_2$
or $\pm 1/s_2$, we can make sure that $x(S_1) = x(S_2)$.

Conversely, if $S_{1} \in E_{s_{1}}[3]$ and $S_{2} \in E_{s_{2}}[3]$ have the
same $x$-coordinate, then $C = E_{s_{1}}' \times_{\mathbb{P}^{1}} E_{s_{2}}'$
contains four points $(\pm\psi(S_{1}), \pm\psi(S_{2}))$ whose differences have
order~$3$ in~$J(C)$ and form two orbits under the hyperelliptic involution.
Any such curve has a model of the form~$C_t$.

For $j = 1,2$, $\psi$ can be decomposed as $\psi = [2] \circ \lambda_{s_j}$,
where $\lambda_{s_j}$ is an isomorphism from $E_{s_j}$ to~$E_{s_j}'$ such that
$\lambda_{s_j}(S_j) = -T_j$. Proposition~\ref{isogeny} shows that
there are isogenies $\varphi_{12}$ and~$\varphi_{21}$ between $E_{s_1}'$ and~$E_{s_2}'$.
We use the same symbols for the induced isogenies between $E_{s_1}$ and~$E_{s_2}$.
\[ \xymatrix{ E_{s_{1}} \ar[r]^{\lambda_{s_{1}}} \ar[d]_{\varphi_{12}}
                & E_{s_{1}}' \ar[r]^{[2]} \ar[d]^{\varphi_{12}}
                & E_{s_{1}}' \ar[d]
                & & E_{s_{2}} \ar[r]^{\lambda_{s_{2}}} \ar[d]_{\varphi_{21}}
                & E_{s_{2}}' \ar[r]^{[2]} \ar[d]^{\varphi_{21}}
                & E_{s_{2}}'\ar[d] \\
              E_{s_{2}} \ar[r]^{\lambda_{s_{2}}}
                & E_{s_{2}}' \ar[r]^{[2]}
                & E_{s_{2}}'
                & & E_{s_{1}} \ar[r]^{\lambda_{s_{1}}}
                & E_{s_{1}}' \ar[r]^{[2]}
                & E_{s_{1}}'
            }
\]

\begin{proposition}
  Let $\pi_1$ and~$\pi_2$ be the $x$-coordinate maps of $E_{s_1}$ and~$E_{s_2}$.
  Assume that there
  are $S_1 \in E_{s_1}[3]$ and $S_2 \in E_{s_2}[3]$ with $\pi_1(S_1) = \pi_2(S_2)$.
  Then for any $P_{1} \in E_{s_{1}}$ and $P_{2} \in E_{s_{2}}$ with
  $\pi_1(P_{1}) = \pi_2(P_{2})$, we have
  \[ \pi_1\bigl([2]^{-1}(-P_{1}\pm\varphi_{21}(P_{2}))\bigr)
       = \pi_2\bigl([2]^{-1}(-P_{2}\pm\varphi_{12}(P_{1}))\bigr)\,.
  \]
\end{proposition}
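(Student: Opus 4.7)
The plan is to lift $(P_1,P_2)$ to a point $Q\in C_t$, exploit the automorphism $\sigma$ of $C_t$, and push the resulting relation back down to $E_{s_j}$ via $\psi$. The crucial observation is that the formula $x\mapsto(x^2+1)^2/(4x^2)$ governing $\psi$ on $x$-coordinates is the same for $E_{s_1}$ and $E_{s_2}$, independent of~$s$.

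First I would use the hypothesis $\pi_1(P_1)=\pi_2(P_2)$ to produce a suitable~$Q$. Since $\pi'_j(\psi(X))=(x(X)^2+1)^2/(4x(X)^2)$ is $j$-independent, $\psi(P_1)$ and $\psi(P_2)$ have the same $x$-coordinate on the common $\mathbb{P}^1$, so there exists $Q\in C_t$ with $\phi_j(Q)=\psi(P_j)$ for $j=1,2$. Setting $A_j^\pm:=\phi_j(\sigma^{\pm 1}Q)\in E'_{s_j}$, the points $A_1^\pm$ and $A_2^\pm$ descend from the common point $\sigma^{\pm 1}Q\in C_t$, hence $\pi'_1(A_1^\pm)=\pi'_2(A_2^\pm)$.

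Next I would derive a doubling formula for $A_j^\pm$ mirroring the computation in the proof of Proposition~\ref{isogeny}. Applied with the roles of $E_1,E_2$ swapped, it yields $\phi_j(\sigma Q)-\phi_j(\sigma^2 Q)=\varphi_{3-j,j}(\phi_{3-j}(Q))=\varphi_{3-j,j}(\psi(P_{3-j}))$. Combined with $\phi_j(Q)+\phi_j(\sigma Q)+\phi_j(\sigma^2 Q)=0$ in $E'_{s_j}$, which holds because $C_t/\langle\sigma\rangle$ has genus zero (so $[Q+\sigma Q+\sigma^2 Q]\in J(C_t)$ is independent of $Q$), with the constant value computed by evaluating at the $\sigma$-fixed point $Q=0^+$ and using that $T_j=\phi_j(0^+)$ has order~$3$, this gives $2A_j^\pm=-\psi(P_j)\pm\varphi_{3-j,j}(\psi(P_{3-j}))$ on $E'_{s_j}$.

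Finally I would transport back to $E_{s_j}$. Setting $Y_j^\pm:=-P_j\pm\varphi_{3-j,j}(P_{3-j})\in E_{s_j}$ and using $\psi=[2]\circ\lambda_{s_j}$ together with the commutation $\psi\circ\varphi_{3-j,j}=\varphi_{3-j,j}\circ\psi$ from the diagram, one computes $\psi(Y_j^\pm)=2A_j^\pm$. The identity $[2]^{-1}(Y_j^\pm)=\psi^{-1}(\lambda_{s_j}(Y_j^\pm))$ on $E_{s_j}$, which holds because $[2]X=Y_j^\pm$ is equivalent to $\psi(X)=[2]\lambda_{s_j}(X)=\lambda_{s_j}(Y_j^\pm)$, lets me rewrite $\pi_j([2]^{-1}(Y_j^\pm))=\pi_j(\psi^{-1}(\lambda_{s_j}(Y_j^\pm)))$. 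Since the $x$-coordinate of $\psi(X)$ depends only on $x(X)$ through a formula independent of~$j$, the set $\pi_j(\psi^{-1}(Z))$ depends on $Z\in E'_{s_j}$ only through $\pi'_j(Z)$ and via the same formula for both~$j$. Thus the claim reduces to $\pi'_1(\lambda_{s_1}(Y_1^\pm))=\pi'_2(\lambda_{s_2}(Y_2^\pm))$, which combined with $\pi'_1(A_1^\pm)=\pi'_2(A_2^\pm)$ would follow from $\pi'_j(\lambda_{s_j}(Y_j^\pm))=\pi'_j(A_j^\pm)$.

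The main obstacle is this last equality: the relation $2\lambda_{s_j}(Y_j^\pm)=2A_j^\pm$ only shows $A_j^\pm=\lambda_{s_j}(Y_j^\pm)$ modulo $E'_{s_j}[2]$, and a nontrivial $2$-torsion shift generally changes the $x$-coordinate. Resolving this requires carefully using the normalization $\lambda_{s_j}(S_j)=-T_j$ from the definition of $\lambda_{s_j}$, which uniquely pins down the isomorphism: checking the identification at the $\sigma$-fixed lift $Q=0^+$, where both $A_j^+$ and $\lambda_{s_j}(Y_j^+)$ evaluate to $T_j$ (here using $\varphi_{21}(S_2)=0$), anchors the $2$-torsion shift as trivial, and one extends this equality to all $(P_1,P_2)$ by observing that the obstruction is a locally constant function of $(P_1,P_2)$ on the irreducible curve of pairs satisfying $\pi_1(P_1)=\pi_2(P_2)$.
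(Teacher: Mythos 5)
Your argument is correct and parallels the paper's proof at every major step: lift $(P_1,P_2)$ to a point $Q\in C_t$ using the $s$-independence of $\psi$, use $\sigma$ and the isogeny formula from the proof of Proposition~\ref{isogeny} to express $\phi_j(\sigma^{\pm1}Q)$ in terms of $\varphi_{12},\varphi_{21}$, and then kill the resulting torsion ambiguity by rigidity (a morphism from the irreducible fibered product into a finite set of torsion points is constant) anchored at the special fiber $(P_1,P_2)=(S_1,S_2)$, where $Q=0^+$ and $\varphi_{21}(S_2)=0$. The one genuine technical difference is how you obtain the key identity $\lambda_{s_j}(Y_j^\pm)=A_j^\pm$: the paper lifts $[Q-W]$ along the dual isogeny $(\theta_1,\theta_2)^\vee$, shows $(R_1,R_2)\equiv(Q_1,Q_2)$ modulo $[4]$-torsion, and invokes Lemma~\ref{L:help} ($\theta_j\sigma\theta_j^\vee=[-1]$) to extract $\phi_1(\sigma Q)-\phi_1(\sigma W)=-R_1+\varphi_{21}(R_2)$ exactly, whereas you inline the genus-zero fact underlying Lemma~\ref{L:help} as the direct trace relation $\phi_j(Q)+\phi_j(\sigma Q)+\phi_j(\sigma^2 Q)=0$ (with constant $0$ since $3T_j=0$), combine it with the difference formula, and get $2A_j^\pm=\psi(Y_j^\pm)$, i.e.\ agreement modulo $E'_{s_j}[2]$. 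Your version trades the explicit Jacobian bookkeeping for a cleaner $[2]$-torsion obstruction, but the closing rigidity-plus-evaluation step is exactly the device the paper invokes when asserting the map $(P_1,P_2)\mapsto-Q_1+\varphi_{21}(Q_2)-\phi_1(\sigma Q)$ is constant. For a polished write-up you would want to make explicit that $E_{s_1}\times_{\PP^1}E_{s_2}$ is irreducible (which holds since the branch loci of $\pi_1,\pi_2$ are disjoint in the case $P(\pi_1,\pi_2)=(0,4)$); the paper relies on the same fact implicitly.
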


This gives usually eight further common torsion $x$-coordinates, in addition
to the set $\pi_1(P_1 + E_1[2]) = \pi_2(P_2 + E_2[2])$ of size four, corresponding
to an orbit on~$C_t$ under~$\langle \iota, \sigma, \tau \rangle$.

\begin{proof}
  We shall use the same notations as in the proof of Proposition~\ref{isogeny},
  except that now we replace $E_1$ and~$E_2$ with $E_{s_1}'$ and~$E_{s_2}'$.

  Let $Q_1 = \lambda_{s_1}(P_1)$ and $Q_2 = \lambda_{s_2}(P_2)$.
  Then $[2]Q_1$ and $[2]Q_2$ have the same $x$-coordinate.
  So there is $Q \in C_t$ such that $(\phi_{1}(Q),\phi_{2}(Q)) = ([2]Q_1,[2]Q_2)$.
  Let $(R_1,R_2) \in E_{s_1}' \times E_{s_2}'$ be such that
  \[ (\theta_{1},\theta_{2})^{\vee}(R_1,R_2) = [Q-W] \,, \]
  where $W \in C_t$ is a fixed Weierstrass point.
  Then
  \begin{align*}
    ([2]R_1,[2]R_2) &= \bigl((\theta_{1},\theta_{2}) \circ (\theta_{1},\theta_{2})^{\vee}\bigr)(R_{1},R_{2}) \\
                    &= (\theta_{1},\theta_{2})([Q-W])
                     = ([2]Q_1,[2]Q_2) - (\phi_{1}(W),\phi_{2}(W))\,.
  \end{align*}
  Therefore,
  $(R_1,R_2) = (Q_1,Q_2) - (W_1,W_2)$
  for some $(W_1,W_2) \in (E_{s_1}' \times E_{s_2}')[4]$. Since
  by Proposition~\ref{isogeny} and Lemma~\ref{L:help}
  \begin{align*}
    (-Q_{1} + \varphi_{21}(Q_{2})) - (-W_{1} + \varphi_{21}(W_{2}))
      &= \bigl(\theta_{1} \circ \sigma \circ (\theta_{1},\theta_{2})^{\vee}\bigr)(R_{1},R_{2}) \\
      &= (\theta_{1} \circ \sigma) ([Q-W])
       = \phi_{1}(\sigma Q) - \phi_{1}(\sigma W)\,,
  \end{align*}
  we have
  \[ -Q_{1} + \varphi_{21}(Q_{2}) - \phi_{1}(\sigma Q)
      = -W_{1} + \varphi_{21}(W_{2}) - \phi_{1}(\sigma W) \in E_{s_1}'[4]\,.
  \]
  Therefore, $(P_1,P_2) \mapsto -Q_{1} + \varphi_{21}(Q_{2}) - \phi_{1}(\sigma Q)$
  gives a constant map from $E_{s_1} \times_{\PP^1} E_{s_2}$ to~$E_{s_1}'$.
  To determine this constant, we take $(P_1,P_2) = (S_1,S_2)$;
  then $(Q_1,Q_2) = (-T_1,-T_2)$ and $Q = 0^+$, which imply
  \[ \phi_{1}(\sigma Q) = \phi_1(Q) = T_1 = -Q_{1} + \varphi_{21}(Q_{2}) \,, \]
  showing that the constant is in fact zero.
  Similarly, we have
  \[ \phi_{1}(\sigma^2 Q) = -Q_{1} - \varphi_{21}(Q_{2})\,, \quad
     \phi_{2}(\sigma Q) = -Q_{2} + \varphi_{12}(Q_{1})\,, \quad
     \phi_{2}(\sigma^2 Q) = -Q_{2} - \varphi_{12}(Q_{1})\,.
  \]
  Now consider a point $P'_1 \in E_{s_1}$ such that $[2]P'_1 = -P_1 \pm \varphi_{21}(P_2)$.
  Then
  \[ \psi(P'_1) = \lambda_{s_1}([2]P'_1) = -Q_1 \pm \varphi_{21}(Q_2) = \phi_1(\sigma^k Q) \]
  (with $k = 1$ for the positive sign and $k = 2$ for the negative sign).
  Since $\phi_{1}(\sigma^k Q)$ and~$\phi_{2}(\sigma^k Q)$ have the same $x$-coordinate,
  there is $P'_2 \in E_{s_2}$ with $[2]P'_2 = -P_2 \pm \varphi_{12}(P_1)$ such that
  $\psi(P'_2)$ has the same $x$-coordinate as~$\psi(P'_1)$. Since $E_{s_j}[2]$
  acts transitively on the fibers of~$\psi$ (and $\psi$ is independent of~$s_j$),
  the conclusion follows.
\end{proof}

Finally, we give the proof of Theorem \ref{T34}.

\begin{proof}[Proof of Theorem \ref{T34}]
Let $C:y^{2}=x^{6}+130x^{3}+13$ be the curve given in Theorem \ref{thm:g2_34},
and let
\begin{align*}
\phi_{1}:C & \longrightarrow E_{1}':y^{2}=\left(x-\frac{17+4\sqrt{13}}{9}\right)\left(x^{2}-\frac{7-\sqrt{13}}{6}x+\frac{31-7\sqrt{13}}{18}\right)\\
(x,y) & \longmapsto\left(\frac{(x-\sqrt[6]{13})^{2}}{(x+\sqrt[6]{13})^{2}},\frac{cy}{(x+\sqrt[6]{13})^{3}}\right)
\end{align*}
and
\begin{align*}
\phi_{2}:C & \longrightarrow E_{2}':y^{2}=x\left(x-\frac{17+4\sqrt{13}}{9}\right)\left(x^{2}-\frac{7-\sqrt{13}}{6}x+\frac{31-7\sqrt{13}}{18}\right)\\
(x,y) & \longmapsto\left(\frac{(x-\sqrt[6]{13})^{2}}{(x+\sqrt[6]{13})^{2}},\frac{c(x-\sqrt[6]{13})y}{(x+\sqrt[6]{13})^{4}}\right)
\end{align*}

be the two morphisms constructed in Section \ref{S:relg2}, where $c$ is a constant
whose precise value is not important for us. We describe the images
of the hyperelliptic torsion packet of $C$ under $\phi_{1}$ and
$\phi_{2}$ as follows:
\begin{enumerate}
\item the six Weierstrass points are mapped to three torsion points of order
$2$;
\item the four points $0^{+},0^{-},\infty^{+},\infty^{-}$ are mapped to two
torsion points of order $3$;
\item the remaining points in the hyperelliptic torsion packet of $C$ are
mapped to
\begin{itemize}
\item the points with $x$-coordinates satisfying
\[
9x+(5-2\sqrt{13})=0,
\]
which are torsion points of order $4$ on $E_{1}'$ and of order $6$
on $E_{2}'$;
\item the points with $x$-coordinates satisfying
\[
3x-(5-2\sqrt{13})=0,
\]
which are torsion points of order $6$ on $E_{1}'$ and of order $4$
on $E_{2}'$;
\item the points with $x$-coordinates satisfying
\[
9x^{2}+6(5-2\sqrt{13})x+(17+4\sqrt{13})=0,
\]
which are torsion points of order $8$ on $E_{1}'$ and of order $24$
on $E_{2}'$;
\item the points with $x$-coordinates satisfying
\[
9x^{2}+18(7+2\sqrt{13})x+(17+4\sqrt{13})=0,
\]
which are torsion points of order $24$ on $E_{1}'$ and of order
$8$ on $E_{2}'$.
\end{itemize}
\end{enumerate}
In particular, all these torsion points have order dividing $24$.
By moving the three common $2$-torsion $x$-coordinates of $E_1'$
and $E_2'$ to $\{0,1,\infty\}$, we can see that $E_1'\cong E_s'$
and $E_2'\cong E_{s/3}'$ for some $s\in\C$ satisfying
$s^8+174s^4+81=0$. By lifting the torsion points on $E_s'$ and
$E_{s/3}'$ to the torsion points on $E_1=E_s$ and $E_2=E_{s/3}$
via $\psi$, we obtain the conclusion.
\end{proof}

%==========================================================================

\begin{bibdiv}
\begin{biblist}

\bib{BakerRibet2003}{article}{
   author={Baker, Matthew H.},
   author={Ribet, Kenneth A.},
   title={Galois theory and torsion points on curves},
   language={English, with English and French summaries},
   note={Les XXII\`emes Journ\'{e}es Arithmetiques (Lille, 2001)},
   journal={J. Th\'{e}or. Nombres Bordeaux},
   volume={15},
   date={2003},
   number={1},
   pages={11--32},
   issn={1246-7405},
   review={\MR{2018998}},
}

\bib{BogomolovFu}{article}{
   author={Bogomolov, Fedor A.},
   author={Fu, Hang},
   title={Division polynomials and intersection of projective torsion
   points},
   journal={Eur. J. Math.},
   volume={2},
   date={2016},
   number={3},
   pages={644--660},
   issn={2199-675X},
   review={\MR{3536148}},
   doi={10.1007/s40879-016-0111-7},
}

\bib{BogomolovFuTschinkel}{article}{
   author={Bogomolov, Fedor},
   author={Fu, Hang},
   author={Tschinkel, Yuri},
   title={Torsion of elliptic curves and unlikely intersections},
   conference={
      title={Geometry and physics. Vol. I},
   },
   book={
      publisher={Oxford Univ. Press, Oxford},
   },
   date={2018},
   pages={19--37},
   review={\MR{3932255}},
}

\bib{BogomolovTschinkel}{article}{
   author={Bogomolov, Fedor},
   author={Tschinkel, Yuri},
   title={Algebraic varieties over small fields},
   conference={
      title={Diophantine geometry},
   },
   book={
      series={CRM Series},
      volume={4},
      publisher={Ed. Norm., Pisa},
   },
   date={2007},
   pages={73--91},
   review={\MR{2349648}},
}

\bib{DeMarcoKriegerYe}{article}{
   author={DeMarco, Laura},
   author={Krieger, Holly},
   author={Ye, Hexi},
   title={Uniform Manin-Mumford for a family of genus 2 curves},
   journal={Ann. of Math. (2)},
   volume={191},
   date={2020},
   number={3},
   pages={949--1001},
   issn={0003-486X},
   review={\MR{4088354}},
   doi={10.4007/annals.2020.191.3.5},
}

\bib{DimitrovGaoHabegger}{article}{
   author={Dimitrov, Vesselin},
   author={Gao, Ziyang},
   author={Habegger, Philipp},
   title={Uniformity in Mordell-Lang for curves},
   journal={Ann. of Math. (2)},
   volume={194},
   date={2021},
   number={1},
   pages={237--298},
   issn={0003-486X},
   review={\MR{4276287}},
   doi={10.4007/annals.2021.194.1.4},
}

\bib{Gao}{misc}{
   author={Gao, Ziyang},
   title={Recent developments of the uniform Mordell-Lang conjecture},
   date={2021-12-26},
   note={\url{https://arxiv.org/abs/2104.03431v5}},
}

\bib{GaoGeKuehne}{misc}{
   author={Gao, Ziyang},
   author={Ge, Tangli},
   author={K\"uhne, Lars},
   title={The uniform Mordell-Lang conjecture},
   date={2021-07-24},
   note={\url{https://arxiv.org/abs/2105.15085v2}},
}

\bib{Kuehne}{misc}{
   author={K\"uhne, Lars},
   title={Equidistribution in families of abelian varieties and uniformity},
   date={2021-09-05},
   note={\url{https://arxiv.org/abs/2101.10272v3}},
}

\bib{Mazur}{article}{
   author={Mazur, Barry},
   title={Arithmetic on curves},
   journal={Bull. Amer. Math. Soc. (N.S.)},
   volume={14},
   date={1986},
   number={2},
   pages={207--259},
   issn={0273-0979},
   review={\MR{828821}},
   doi={10.1090/S0273-0979-1986-15430-3},
}

\bib{Poonen}{article}{
   author={Poonen, Bjorn},
   title={Genus-two curves with 22 torsion points},
   language={English, with English and French summaries},
   journal={C. R. Acad. Sci. Paris S\'{e}r. I Math.},
   volume={330},
   date={2000},
   number={7},
   pages={573--576},
   issn={0764-4442},
   review={\MR{1760441}},
   doi={10.1016/S0764-4442(00)00222-6},
}

\bib{Poonen2001}{article}{
   author={Poonen, Bjorn},
   title={Computing torsion points on curves},
   journal={Experiment. Math.},
   volume={10},
   date={2001},
   number={3},
   pages={449--465},
   issn={1058-6458},
   review={\MR{1917430}},
}

\bib{Raynaud}{article}{
   author={Raynaud, M.},
   title={Courbes sur une vari\'{e}t\'{e} ab\'{e}lienne et points de torsion},
   language={French},
   journal={Invent. Math.},
   volume={71},
   date={1983},
   number={1},
   pages={207--233},
   issn={0020-9910},
   review={\MR{688265}},
   doi={10.1007/BF01393342},
}

\bib{Stoll34}{misc}{
   author={Stoll, Michael},
   title={A genus 2 curve over $\Q$ with a hyperelliptic torsion packet of size 34},
   note={\url{http://www.mathe2.uni-bayreuth.de/stoll/torsion.html}},
}

\end{biblist}
\end{bibdiv}

\end{document}